\numberwithin{equation}{section}
\newtheorem{theo}{Theorem}[section]
\newtheorem{lemme}[theo]{Lemma}
\def\AA{{\textbf{(A)} \ \!\!}}
\def\cE{{\mathcal{E}\ \!\!}}
\def\E{{\mathbb{E}\ \!\!}}
\def\I{{\mathcal{I}\ \!\!}}
\def\Z{{\mathbb{Z}\ \!\!}}
\def\N{{\mathbb{N}\ \!\!}}
\def\R{{\mathbb{R}\ \!\!}}
\def\L{{\mathcal{L}\ \!\!}}
\def\D{{\mathcal{D}\ \!\!}}
\def\P{{\mathbb{P}\ \!\!}}
\def\C{{\mathcal{C}\ \!\!}}
\def\FF{{\mathcal{F}\ \!\!}}
\def\IC{{\mathcal{I} \mathcal{C} \ \!\!}}
\def\DC{{\mathcal{D} \mathcal{C} \ \!\!}}
\def\Var{{\mathrm{{\rm Var}}}}
\def\Ent{{\mathrm{{\rm Ent}}}}
\def\Sign{{\mathrm{{\rm Sign}}}}
\def\Lip{{\mathrm{{\rm Lip}}}}
\def\and{{\mathrm{{\rm and}}}}
\def\CP{{c_\mathrm{{\rm P}}}}
\def\CLS{{c_\mathrm{{\rm LS}}}}
\def\CB{{c_{\mathrm{{\rm B}}_p}}}
\def\vphi{{\varphi\ \!\!}}
\begin{document}

\title[Intertwining relations for diffusions]{Intertwining relations for one-dimensional diffusions and application to functional inequalities}

\author{Michel~Bonnefont} %
\address[M.~Bonnefont]{UMR CNRS 5251, Institut de Math\'ematiques de Bordeaux, Universit\'e Bordeaux 1, France} %
\email{\url{mailto:michel.bonnefont(at)math.u-bordeaux1.fr}} %
\urladdr{\url{http://www.math.u-bordeaux1.fr/~mibonnef/}}

\author{Ald\'eric~Joulin} %
\address[A.~Joulin, corresponding author]{UMR CNRS 5219, Institut de Math\'ematiques de Toulouse, Universit\'e de Toulouse, France}%
\email{\url{mailto:ajoulin(at)insa-toulouse.fr}}%
\urladdr{\url{http://www-gmm.insa-toulouse.fr/~ajoulin/}}

\keywords{Diffusion process; Sturm-Liouville operator; Schr\"odinger operator; Feynman-Kac semigroup; intertwining relation; spectral gap; logarithmic Sobolev inequality.}

\subjclass[2000]{60J60, 47D07, 47D08.}

\maketitle

%------------------------------------------------

\begin{abstract}
Following the recent work \cite{cha_jou} fulfilled in the discrete case, we provide in this paper new intertwining relations for semigroups of one-dimensional diffusions. Various applications of these results are investigated, among them the famous variational formula of the spectral gap derived by Chen and Wang \cite{chen_wang2} together with a new criterion ensuring that the logarithmic Sobolev inequality holds. We complete this work by revisiting some classical examples, for which new estimates on the optimal constants are derived.
\end{abstract}

\normalsize

\vspace{0.5cm}

\section{Introduction}
\label{sect:intro} \textrm{} \indent It is by now well known that commutation relations and convexity are of great importance in the analysis and geometry of Markov diffusion semigroups \cite{bakry, ledoux}. Keeping in mind the application to functional inequalities such as Poincar\'e or logarithmic-Sobolev inequalities, various tools have been developed by several authors to obtain this type of commutation results and among them the famous $\Gamma_2$ calculus introduced by Bakry and \'Emery in \cite{bakry_emery}. We refer to the set of notes of Ledoux \cite{ledoux2} and also to the forthcoming book \cite{bgl} for a clear and pleasant introduction on the topic, with historical references and credit. In view to provide new functional inequalities on discrete spaces, Chafa\"i and the second author recently investigated in \cite{cha_jou} the case of birth-death processes, which are the discrete space analogue of diffusion processes. The key point in the underlying analysis relies on simple intertwining relations involving a family of discrete gradients and two different generators: the first one is that of the birth-death process considered, whereas the second one is a Schr\"odinger generator associated to a dual process.

Coming back to the diffusion setting, a natural question arises: are these intertwining techniques tractable and convenient to address the geometry of diffusion semigroups, giving rise to new results in this large body of work ? The purpose of these notes is to convince the reader of the relevance of this approach and to investigate several consequences of the intertwining approach, at least in the one-dimensional case. Among the potential applications provided by this alternative point of view, we recover on the one hand the famous variational formula of Chen and Wang \cite{chen_wang2} for the spectral gap, and on the other hand we are able to give a new condition ensuring that the logarithmic Sobolev inequality is satisfied with a computable constant. In particular, this criterion turns out to be efficient in the situation when the Bakry-\'Emery criterion fails, as for instance in the case of a potential which is not strictly convex on the real line.

The paper is organized as follows. In section 2, we recall some basic material on Markov diffusion processes. Then the desired intertwining relations are derived in Section 3 with our main results Theorems~\ref{theo:intert_a} and \ref{theo:bicommutation}. Section 4 is devoted to applications of the previous results to functional inequalities and, in the final section, we revisit classical examples for which we provide new estimates on the optimal constants.

\section{Preliminaries}
\label{sect:preli} \textrm{} \indent

Let $\C ^\infty (\R)$ be the space of infinitely differentiable real-valued functions on $\R$ and let $\C _0 ^\infty (\R)$ be the subspace of $\C ^\infty (\R)$ consisting of compactly supported functions. Denote $\Vert \cdot \Vert _\infty$ the essential supremum norm with respect to the Lebesgue measure. We mention that in the present paper we use the terminology \textit{increasing} and \textit{decreasing} for the expressions \textit{non-decreasing} and \textit{non-increasing}, respectively. The main protagonist of the present paper is a Sturm-Liouville operator on $\R$, which is the second-order diffusion operator defined on $\C ^\infty (\R)$ by
$$
\L f := \sigma ^2 \, f'' + b \, f' ,
$$
where $b$ and $\sigma$ are two real-valued measurable functions on $\R$. Let $\Gamma$ be the \textit{carr\'e du champ} operator which is the bilinear symmetric form defined on $\C ^\infty (\R) \times \C ^\infty (\R)$ by
\[
\Gamma (f,g) := \frac{1}{2}\,\left(\L (fg)-f \, \L g -g \, \L f \right) \, = \, \sigma ^2 \, f' \, g' .
\]

Let us introduce some set of hypothesis on the diffusion operator $\L$ that we will refer to as assumption $\AA$ in the sequel: \vspace{0.1cm}

$\circ$ Smoothness: the functions $b$ and $\sigma$ belong to $\C ^\infty (\R)$. \vspace{0.1cm}

$\circ$ Ellipticity: it means that the diffusion function $\sigma$ is non-degenerate, i.e. $\sigma(x) >0$ for every $x\in \R$. \vspace{0.1cm}

$\circ$ Completeness: the metric space $(\R, d_\sigma )$ is complete, where $d_\sigma $ is the distance associated to $\L$, i.e.
$$
d_\sigma (x,y) := \sup \, \left \{ \vert f(x) - f(y) \vert : f\in \C ^\infty (\R) , \Vert \Gamma (f,f) \Vert _\infty \leq 1 \right\} , \quad x,y\in \R ,
$$
which can be rewritten as
$$
d_\sigma (x,y) = \left \vert \int_x ^y \frac{du}{\sigma(u) } \right \vert .
$$

Let $U$ be the potential defined by
\begin{equation}
\label{eq:U}
U(x) := U(0) - \int _{0} ^x \frac{b(u)}{\sigma (u) ^2} \, du , \quad x\in \R ,
\end{equation}
where $U(0)$ is some arbitrary constant and let $\mu$ be the measure with Radon-Nikodym derivative $h := e^{-U} /\sigma^2$ with respect to the Lebesgue measure. Simple computations show that for every $f\in\mathcal{C}_0 ^\infty(\R)$,
$$
\int_\R \L f \, d\mu  = 0 ,
$$
and moreover the operator $\L$ is symmetric on $\C_0 ^\infty(\R)$ with respect to the measure $\mu$, i.e. for every $f,g \in\mathcal{C}_0 ^\infty(\R)$,
\[
\cE _\mu (f,g) := \int_\R \Gamma (f,g) \, d\mu = - \int_\R f \L g \, d\mu = - \int_\R \L f \, g \, d\mu  = \int_\R \sigma ^2 \, f' \, g' \, d\mu ,
\]
hence $\L$ is non-positive on $\C_0 ^\infty(\R)$. Under $\AA$, the operator $(\L , \C_0 ^\infty(\R))$ is essentially self-adjoint in $L^2 (\mu)$, that is, it admits a unique self-adjoint extension (still denoted $\L$) with domain $\D (\L) \subset L^2(\mu)$ in which the space $\C_0 ^\infty(\R)$ is dense for the norm
$$
\Vert f\Vert _{\L} := \left( \Vert f \Vert ^2 _{L^2 (\mu)} + \Vert \L f \Vert ^2 _{L^2 (\mu)} \right) ^{1/2} .
$$
In other words the space $\C_0 ^\infty(\R)$ is a core of the domain $\D (\L)$. By spectral theorem, the operator $\L$ generates a unique strongly continuous symmetric semigroup $(P_t)_{t\geq 0}$ on $L^2(\mu)$ such that for every function $f \in L^2 (\mu)$ and every $t>0$, we have $\partial _t P_t f = \L P_t f $. Here, the notation $\partial_u$ stands for the derivative with respect to some parameter $u$, and will be used all along the paper. The semigroup preserves positivity (it transforms positive functions into positive functions) but it is only sub-Markov \textit{a priori} in the sense that $P_t 1$ is less than 1 since it may happen that $1 \notin \D (\L)$. Recall that $\mu$ may have infinite mass and thus $P_t 1$ has to be defined as the increasing limit of $P_t f_n$ as $n\to +\infty$, where $(f_n)_{n\in \N}$ is a sequence of positive functions belonging to $\C_0 ^\infty(\R)$ and increasing pointwise to the constant function 1. \\
The closure $(\cE _\mu , \D (\cE _\mu))$ of the bilinear form $(\cE _\mu , \C_0 ^\infty(\R))$ is a Dirichlet form on $L^2(\mu)$ and by spectral theorem we have the dense inclusion $\D (\L) \subset \D (\cE _\mu)$ for the norm
$$
\Vert f\Vert _{\cE _\mu} := \left( \Vert f \Vert ^2 _{L^2 (\mu)} + \cE _\mu (f,f) \right) ^{1/2} .
$$

As probabilists, we are interested by a diffusion process having the operator $\L$ as infinitesimal generator. Under appropriate growth conditions on the functions $\sigma$ and $b$, such a process corresponds to the unique solution (up to the explosion time) of the following Stochastic Differential Equation (in short SDE),
$$
dX_t = \sqrt{2} \, \sigma(X_t) \, dB_t + b(X_t) \, dt,
$$
where $B:= (B_t)_{t\geq 0}$ is a standard Brownian motion on a given filtered probability space $(\Omega, \FF , (\FF_t)_{t\geq 0}, \P)$. If $\zeta \in (0 , +\infty ]$ denotes the explosion time defined as the almost sure limit of the sequence of stopping times
$$
\tau_n := \inf \{ t\geq 0 : \vert X_t \vert \geq n \} , \quad n \in \N ,
$$
then on the event $\zeta < +\infty$ we have $\lim_{t\to \zeta} X_t  = + \infty$ or $-\infty$ almost surely. Denote $\Delta$ this limit and define $X_t$ to be $\Delta$ when $t \geq \zeta$. Then we obtain a process which takes its values in the one-point compactification space $\R \cup \{ \Delta\}$ equipped with the natural one-point compactification topology. Define the space $\C ^\infty _0 (\R ^\Delta )$ as the extension to $\R \cup \Delta$ of the space $\C ^\infty _0 (\R)$, i.e. every function $f \in \C ^\infty _0 (\R)$ can be extended to a continuous function on $\R \cup \{ \Delta\}$ by letting $f(\Delta) = 0$. In terms of semigroup, we have for every $f \in \C ^\infty _0 (\R ^\Delta)$,
$$
P_t f (x) = \E _x [f(X_{t \wedge \zeta})] = \E _x [f(X_{t}) \, 1 _{\{ t < \zeta \}} ],
$$
where $\E_x$ stands for the conditional expectation knowing the initial state $x$. A necessary and sufficient condition in terms of the functions $b$ and $\sigma$ ensuring the non-explosion of the process is the following, cf. \cite{ikeda}:
\[
\int _0 ^{+\infty} e^{U(x)} \int _0 ^x \frac{e^{- U(y)}}{\sigma (y) ^2} \, dy \, dx = +\infty , \quad \int _{-\infty} ^0 e^{U(x)} \int _x ^0 \frac{e^{- U(y)}}{\sigma (y) ^2} \, dy \, dx = +\infty .
\]

When the measure $\mu$ is finite, i.e. the function $h$ is integrable with respect to the Lebesgue measure, then the process is positive recurrent. In this case and up to renormalization one can assume in the sequel that $\mu$ is a probability measure, the normalizing constant being hidden in the very definition of $U(0)$. The symmetry property on the generator means that the probability measure $\mu$ is time-reversible with respect to these dynamics. \vspace{0.1cm}

To obtain an intertwining relation between gradient and semigroup, we recall the method of the tangent process. Assume $\AA$ and that the function
$$
V_\sigma := \frac{\L \sigma }{\sigma} - b' = \sigma \sigma '' + b \, \frac{\sigma '}{\sigma} - b' ,
$$
is bounded from below by some real constant $\rho_\sigma$. Then the process does not explode in finite time and is moreover positive recurrent if $\rho_\sigma >0$, cf. for instance \cite{bakry_explo}. Denoting $X^x$ the process starting from $x$, the application $x\mapsto X_t ^x $ is almost surely of class $\C ^1$ and we have the linear SDE
$$
\partial_x  X_t ^x = 1 + \sqrt{2} \, \int_0 ^t \sigma '(X_s ^x ) \, \partial_x  X_s ^x \, dB_s + \int_0 ^t b'(X_s ^x) \, \partial_x  X_s ^x \, ds.
$$
Hence by It\^o's formula the solution of this SDE, usually called the tangent process, admits the representation
\[
\partial_x  X_t ^x = \frac{\sigma(X_t ^x)}{\sigma(x)} \, \exp \left( - \int _0 ^t V_\sigma (X_s ^x) \, ds \right) .
\]
Finally differentiating the semigroup and using the chain rule entail the following intertwining relation, available for every $f\in \C _0 ^\infty (\R)$,
\[
(P_t f)'(x) = \E \left[ f'(X_t ^x )\, \frac{\sigma(X_t ^x)}{\sigma(x)} \, \exp \left( - \int _0 ^t V_\sigma (X_s ^x) \, ds \right) \right] .
\]
If $\nabla _\sigma$ denotes the weighted gradient $\nabla_\sigma  f = \sigma \, f'$ then the latter identity rewrites in terms of semigroup as
\begin{equation}
\label{eq:intert}
\nabla_\sigma P_t f = P_t ^{V_\sigma} \nabla_\sigma f,
\end{equation}
with $(P_t ^{V_\sigma})_{t\geq 0}$ the Feynman-Kac semigroup with potential $V_\sigma$. In particular using Cauchy-Schwarz' inequality, we recover the well-known sub-commutation inequality \cite{bakry_emery, capitaine},
\begin{equation}
\label{eq:subcommut}
\Gamma (P_t f,P_t f) \leq e^{-2\rho_\sigma t} \, P_t \Gamma(f,f) ,
\end{equation}
usually obtained through the Bakry-\'Emery criterion involving the $\Gamma_2$ calculus (we will come back to this point later). The basic example we have in mind is the Langevin diffusion with $\sigma = 1$ and for which the potential $U$ defined in \eqref{eq:U} satisfies $U' = -b$. Under the assumption that the second derivative of $U$ exists and is lower bounded, \eqref{eq:intert} yields
$$
(P_t f)'(x) = \E \left[ f'(X_t ^x ) \, \exp \left( - \int _0 ^t U''(X_s ^x) \, ds \right) \right] ,
$$
a formula appearing in \cite{malrieu_talay}. In particular, the convexity of the potential $U$ plays a key role to obtain commutation relations of type \eqref{eq:subcommut}, at the heart of the famous Bakry-\'Emery theory. \vspace{0.1cm}

To conclude this part, let us briefly observe that the Feynman-Kac semigroup admits a nice interpretation in terms of killing when the potential $V_\sigma$ is non-negative. Let $\epsilon$ be an exponential random variable independent of the process $X$ and set
\[
\xi := \inf \{ t \geq 0 : \int_0 ^t V_\sigma (X_s) \, ds > \epsilon \} .
\]
Define on the space $\R \cup \{ \Delta\}$ the process $\tilde{X}$ by
\[
\tilde{X}_t = \left \{
\begin{array}{lll}
X_t & \mbox{ if } & t < \xi \\
\Delta & \mbox{ if } & t \geq \xi
\end{array}
\right.
\]
which is nothing but the process $X$ killed at time $\xi$. If $(\tilde{P}_t)_{t\geq 0}$ stands for the associated semigroup then for every $f\in \C ^\infty _0 (\R ^\Delta)$ we have $\tilde{P} _t f = P_t ^{V_\sigma} f$ so that the intertwining relation \eqref{eq:intert} rewrites as
\[
\nabla _\sigma P_t f  = \tilde{P} _t \nabla_\sigma f .
\]

\section{Intertwining relations}
\label{sect:main}
Let $\C _+ ^\infty (\R) $ be the subset of $\C ^\infty (\R) $ consisting of positive functions. Let us fix some $a\in \C _+ ^\infty (\R)$. We introduce a new Sturm-Liouville operator $\L _a$ defined on $\C ^\infty (\R)$ by
$$
\L _a f = \sigma^2 \, f'' + b_a \, f' , \quad \mbox{ with } \quad b_a := 2\sigma \sigma ' +b - 2\sigma ^2 \, \frac{a'}{a}.
$$
Since the diffusion function $\sigma$ is the same as for the first dynamics, assumption $\AA$ is satisfied for $\L_a$ as soon as it is for $\L$. Note that the drift $b_a$ may be rewritten as
$$
b_a = b +2\sigma^2 \, \frac{h'}{h} , \quad \mbox{ with } \quad h := \frac{\sigma}{a}.
$$
Therefore under $\AA$ the operator $(\L_a, \C ^\infty _0 (\R))$ is essentially self-adjoint in $L^2(\mu_a)$, where the measure $\mu_a$ is given by $d\mu_a := (\sigma /a)^2 \, d\mu$. Denote $(\L _a, \D (\L_a))$ the unique self-adjoint extension and let $(P_{a,t})_{t\geq 0}$ be the associated strongly continuous symmetric semigroup. Denote $(\cE _{\mu _a}, \D (\cE _{\mu _a}))$ the Dirichlet form on $L^2(\mu_a)$ corresponding to the closure of the pre-Dirichlet form $(\cE _{\mu_a} , \C_0 ^\infty(\R))$. Let $X^a$ be the underlying process solution to the SDE
$$
dX^a _t = \sqrt{2} \, \sigma(X^a _t) \, dB_t + b_a (X^a _t) \, dt ,
$$
up to the possible explosion time. This explosion time will be almost surely infinite if and only if we have
\[ \int _0 ^{+\infty} \left(\frac{a(x)}{\sigma(x)}\right) ^2 \, e^{U(x)} \int _0 ^x \frac{e^{- U(y)}}{a(y) ^2} \, dy \, dx = +\infty , \quad \int _{-\infty} ^0 \left(\frac{a(x)}{\sigma(x)}\right) ^2 \, e^{U(x)} \int _x ^0 \frac{e^{- U(y)}}{a(y) ^2} \, dy \, dx = +\infty .
\]
In particular if we have $a\asymp \sigma$, i.e. there exist two positive constant $m,M$ such that
\[
m \, \sigma \leq a \leq M \, \sigma,
\]
then the processes $X$ and $X^a$ are of the same nature (both explosive or not, both positive recurrent or not). As we will see below with the discussion involving the notion of $h$-transform, the processes $X$ and $X^a$ can be seen as a dual processes. Note also that if $a = \sigma$ then $\L$ and $\L_a$ coincide and thus we will write $\L$, $P_t$ and $\mu$ for $\L _\sigma$, $P_{\sigma,t}$ and $\mu_\sigma$, respectively. \vspace{0.1cm}

For a given function $a\in \C _+ ^\infty (\R)$, define the function
\begin{equation}
\label{eq:Va}
V_a := \frac{\L _a (a)}{a} - b' ,
\end{equation}
and assume that $V_a$ is bounded from below. Then the Schr\"odinger operator $\L _a ^{V_a} := \L_a - V_a$ is essentially self-adjoint on $\C ^\infty _0 (\R)$ in $L^2(\mu_a)$, cf. for instance \cite{oleinik}. In particular the following unicity result holds. Below $(P_{a,t}^{V_a})_{t\geq 0}$ denotes the associated Feynman-Kac semigroup.
\begin{lemme}
\label{lemme:schro}
Assume $\AA$ and that $V_a$ is bounded from below. Then for every $g\in L^2 (\mu_a)$, the Schr\"odinger equation
\[
\left \{
\begin{array}{lll}
\partial_t u & = & \L_a ^{V_a} u \\
u(\cdot ,0) & = & g
\end{array}
\right.
\]
admits a unique solution in $L^2 (\mu_a)$ given by $u (\cdot , t) = P_{a,t} ^{V_a} g$.
\end{lemme}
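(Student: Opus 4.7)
The plan is to derive the result from essential self-adjointness of $\L_a ^{V_a}$ via the spectral theorem (for existence) and a standard energy estimate (for uniqueness).

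First I would record that under $\AA$ the operator $(\L _a , \C _0 ^\infty (\R))$ is essentially self-adjoint in $L^2 (\mu _a)$, and by the Oleinik criterion invoked just before the statement this property is preserved under the perturbation by a bounded-below potential, so $(\L _a ^{V_a} , \C _0 ^\infty (\R))$ is essentially self-adjoint in $L^2 (\mu _a)$. Denote its unique self-adjoint extension still by $\L _a ^{V_a}$ and fix $K \geq 0$ with $V_a \geq -K$. Since $-\L _a \geq 0$ on $L^2 (\mu_a)$ and $V_a \geq -K$, the operator $-\L _a ^{V_a}$ is bounded below by $-K$, hence by the spectral theorem it generates a strongly continuous semigroup $(P_{a,t}^{V_a})_{t\geq 0}$ on $L^2 (\mu _a)$ with $\Vert P_{a,t}^{V_a} \Vert _{L^2 (\mu _a) \to L^2 (\mu _a) } \leq e^{Kt}$. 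Functional calculus then shows that $u(\cdot , t) := P_{a,t}^{V_a} g$ belongs to $\D (\L_a ^{V_a})$ for $t>0$ when $g\in \D (\L_a ^{V_a})$ and solves the Cauchy problem; the general case $g\in L^2 (\mu_a)$ follows by density, which settles existence.

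For uniqueness, let $u_1 , u_2$ be two solutions with the same initial datum $g$, set $v := u_1 - u_2$, so that $v(\cdot ,0) = 0$, and compute
$$
\partial _t \Vert v (\cdot , t) \Vert ^2 _{L^2 (\mu _a)} = 2 \langle v , \L _a ^{V_a} v \rangle _{L^2 (\mu _a)} = - 2 \cE _{\mu _a} (v,v) - 2 \int _\R V_a \, v^2 \, d\mu _a \leq 2 K \, \Vert v (\cdot , t) \Vert ^2 _{L^2 (\mu _a)},
$$
using the integration-by-parts identity $\langle v , \L _a v \rangle _{L^2 (\mu _a)} = - \cE _{\mu _a} (v,v)$ together with $\cE _{\mu _a} (v,v) \geq 0$ and $V_a \geq -K$. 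Grönwall's lemma then yields $v \equiv 0$.

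The main obstacle is making the energy identity rigorous when the solution is only assumed to lie in $L^2 (\mu _a)$, since a priori one cannot apply the Dirichlet form identity to an arbitrary $L^2$ function. This is precisely where essential self-adjointness is used in an essential way: being a solution of $\partial _t u = \L _a ^{V_a} u$ in $L^2 (\mu _a)$ forces $u(\cdot , t) \in \D (\L _a ^{V_a}) \subset \D (\cE _{\mu _a})$ for almost every $t$, which legitimizes the integration by parts. Equivalently, one may mollify $v$ in time and invoke the density of $\C _0 ^\infty (\R)$ in $\D (\L _a ^{V_a})$ for the graph norm, or appeal directly to Hille-Yosida theory to identify any $L^2$-solution of the Cauchy problem with the semigroup trajectory generated by the unique self-adjoint extension of $\L _a ^{V_a}$.
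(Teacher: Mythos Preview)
Your proposal is correct and follows exactly the route the paper indicates: the paper does not give a detailed proof of this lemma but simply notes that essential self-adjointness of $\L_a^{V_a}$ on $\C_0^\infty(\R)$ (via the Oleinik criterion \cite{oleinik}) yields the uniqueness result, and your argument fills in precisely those standard spectral-theoretic and energy-estimate details.
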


For $a\in \C _+ ^\infty (\R)$, denote $\nabla _a$ the weighted gradient $\nabla _a f = a \, f'$. Now we are in position to state an intertwining relation involving the gradient $\nabla_a$ and the semigroups $(P_t)_{t\geq 0}$ and $(P_{a,t} ^{V_a})_{t\geq 0}$.
\begin{theo}
\label{theo:intert_a} Assume $\AA$ and that $V_a$ is bounded from below for some function $a \in \C _+ ^\infty (\R)$. Letting $f\in \D (\cE _\mu )$, then the following intertwining relation holds:
\begin{equation}
\label{eq:intert_a}
\nabla _a P_t f = P_{a,t} ^{V_a} \nabla _a f , \quad t \geq 0.
\end{equation}
\end{theo}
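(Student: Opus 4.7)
The plan is to reduce \eqref{eq:intert_a} to a pointwise differential commutation on smooth functions, promote it to a Cauchy problem in $L^2(\mu_a)$, and then identify the solution using the uniqueness statement from Lemma~\ref{lemme:schro}. Finally, the identity is extended from the core $\C _0 ^\infty(\R)$ to the full energy domain $\D(\cE_\mu)$ by a density argument which exploits the fact that $\nabla_a$ is an isometry between the two relevant spaces.

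First, I would establish the key algebraic identity: for every $f\in \C ^\infty(\R)$,
\[
\nabla_a \L f \, = \, (\L_a - V_a) \nabla_a f.
\]
This is a direct computation. Expanding the left-hand side gives $a(\sigma^2 f''' + 2\sigma\sigma' f'' + b f'' + b' f')$. For the right-hand side, one expands $\L_a(a f')$ and uses the identity $V_a \cdot a = \L_a(a) - a\, b'$ coming from \eqref{eq:Va}; the terms involving $f'$ alone cancel, and the coefficient of $f''$ reduces to $a(2\sigma\sigma' + b)$ precisely because of the definition $b_a = 2\sigma\sigma' + b - 2\sigma^2 a'/a$. Thus both sides agree, which is really what motivates the choice of $b_a$ and $V_a$.

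Second, I would observe the crucial isometry
\[
\Vert \nabla_a f \Vert ^2 _{L^2(\mu_a)} \, = \, \int_\R a^2 (f')^2 \, \frac{\sigma^2}{a^2}\, d\mu \, = \, \cE _\mu (f,f),
\]
so that $\nabla_a$ maps $\D(\cE_\mu)$ continuously into $L^2(\mu_a)$. Now take $f\in \C_0 ^\infty(\R)$ and set $u(t) := \nabla_a P_t f$. Since $\C_0 ^\infty(\R)$ is a core for $\L$, essential self-adjointness together with one-dimensional elliptic regularity ensures that $P_t f$ is smooth and that $\partial_t P_t f = \L P_t f$ holds in $L^2(\mu)$. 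Combining this with the pointwise commutation of the first step yields
\[
\partial _t u(t) \, = \, \nabla_a \, \L P_t f \, = \, (\L_a - V_a) \, \nabla_a P_t f \, = \, \L _a ^{V_a} u(t) , \quad u(0) = \nabla_a f .
\]
As $V_a$ is bounded from below and $\nabla_a f \in L^2(\mu_a)$, Lemma~\ref{lemme:schro} applies and forces $u(t) = P_{a,t}^{V_a} \nabla_a f$, which is \eqref{eq:intert_a} for $f\in \C_0 ^\infty(\R)$.

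Finally, I would extend the identity to arbitrary $f\in \D(\cE_\mu)$ by density. Choose a sequence $(f_n) \subset \C_0 ^\infty(\R)$ with $f_n \to f$ for the norm $\Vert \cdot \Vert_{\cE_\mu}$. Then $\nabla_a f_n \to \nabla_a f$ in $L^2(\mu_a)$ by the isometry above, and $P_t f_n \to P_t f$ in $\D(\cE_\mu)$ since $(P_t)$ is a contraction both on $L^2(\mu)$ and for the Dirichlet form. The continuity of $P_{a,t}^{V_a}$ on $L^2(\mu_a)$ then allows one to pass to the limit in \eqref{eq:intert_a}. The step I expect to require the most care is precisely this extension, together with the justification that $P_t f$ is smooth enough for the differential commutation of the second step to make sense; once the essential self-adjointness of $\L$ on $\C_0 ^\infty(\R)$ is invoked, all the approximation and regularisation are controlled by the $\cE_\mu$-isometric character of $\nabla_a$, and the rest of the argument is mechanical.
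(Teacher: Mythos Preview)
Your proposal is correct and follows essentially the same strategy as the paper: the generator-level commutation $\nabla_a \L f = \L_a^{V_a}\nabla_a f$, followed by identification via the uniqueness statement of Lemma~\ref{lemme:schro}. The only organisational difference is that the paper works directly with $f\in\D(\cE_\mu)$ and verifies that $J(t):=\nabla_a P_t f$ stays in $L^2(\mu_a)$ by showing that $t\mapsto -\int P_t f\,\L P_t f\,d\mu$ is decreasing (energy decay), whereas you first treat $f\in\C_0^\infty(\R)$ and then extend by density using the $\cE_\mu$-isometry of $\nabla_a$; both routes rely on the same contraction of the Dirichlet energy under $(P_t)_{t\ge0}$, and your density argument is a legitimate alternative. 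One small point: in your second step you invoke Lemma~\ref{lemme:schro} but do not explicitly check that $u(t)\in L^2(\mu_a)$ for all $t\ge0$ (only at $t=0$); this is exactly what the paper's energy-decay computation provides, and in your framework it follows from the isometry together with $\cE_\mu(P_t f,P_t f)\le\cE_\mu(f,f)$, so you should state it.
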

\begin{proof}
The key point of the proof is the following intertwining relation at the level of the generators, which can be performed by simple computations:
\begin{equation}
\label{eq:commut_a}
\nabla_a \L f = \L_a ^{V_a} \nabla_a f .
\end{equation}
In order to extend such a property to the semigroups, we will use the uniqueness property of Lemma~\ref{lemme:schro}. Define the function $J$ on $\R_+$ by
\[
J(t) :=  \nabla_a P_{t} f .
\]
First we have $J(t) \in L^2(\mu_a)$ for every $t \geq 0$. Indeed,
\begin{eqnarray*}
\int_\R J(t) ^2 \, d\mu_a & = & \int_\R  \left(\nabla_a P_{t} f\right)^2 \, d\mu_a \\
& = & \int_\R  \left(\nabla_\sigma P_{t} f\right)^2 \, d\mu\\
& = & - \int_\R P_t f \, \L P_t f \, d\mu  ,
\end{eqnarray*}
where we used integration by parts (recall that all the elements involved above belong to the domain $\D (\L)$). Let $K$ be the function
$$
K(t) := - \int_\R P_t f \, \L P_t f \, d\mu  .
$$
Differentiating with respect to the time parameter and using integration by parts yield
\[
\partial _t K(t) = - 2 \, \int_\R  \left( \L P_t f \right)^2 \, d\mu \leq 0,
\]
so that $K$ is decreasing. Finally we obtain
\[
\int_\R J(t) ^2 \, d\mu_a \leq - \int_\R f \, \L f \, d\mu  = \cE _\mu \left( f, f \right) ,
\]
which is finite since $f\in \D (\cE _\mu)$. Moreover we have $\lim_{t\to 0} J(t) = \nabla_a f$ where the limit is taken in $L^2(\mu_a)$. Hence by the intertwining relation \eqref{eq:commut_a}, we have in $L^2(\mu_a)$,
\[
\partial_t J(t) =  \nabla_a \L P_{t} f  =  \L_a ^{V_a} \nabla_a P_{t} f = \L_a ^{V_a} J(t).
\]
Therefore by Lemma~\ref{lemme:schro} the function $J$ is the unique solution to the Schr\"odinger equation associated to $\L_a$, with Feynman-Kac potential $V_a$ and initial condition $g = \nabla _a f$. We thus conclude that
\[
J(t)= P_{a,t} ^{V_a} \nabla_a f , \quad t\geq 0.
\]
The proof is now complete.
\end{proof}

Note that the Feynman-Kac potential $V_a$ is computed to be
\begin{eqnarray*}
V_a & = & \frac{\L _a (a)}{a} - b' \\
& = & \sigma^2 \, \frac{a''}{a}+ (b+ 2\sigma \sigma') \, \frac{a'}{a} - 2\sigma^2 \, \left(\frac{a'}{a}\right)^2 -b' .
\end{eqnarray*}
If $g$ is a smooth  function with $g' >0$, consider the weight $a :=1/g ' $. Then the intertwining \eqref{eq:commut_a} at the level of the generators entails the following simple expression for $V_a$:
\[
V_a = - \nabla_a \L g = - \frac{ (\L g)'}{g'} .
\]

Similarly to the distance $d_\sigma $ introduced above, let us define a new distance $d_a$ on $\R$ as follows:
$$
d_a (x,y) := \left \vert \int _{x} ^{y} \frac{du}{a(u)} \right \vert , \quad x, y \in \R .
$$
The space of Lipschitz functions with respect to this metric is denoted $\Lip (d_a)$, hence a function $f$ lies in $\Lip (d_a)$ if and only if the associated Lipschitz seminorm is finite:
$$
\Vert f \Vert _{\Lip (d_a)} := \sup _{x\neq y} \frac{\vert f(x) - f(y) \vert }{d_a (x,y)} .
$$
By Rademacher's theorem a function $f$ is $\kappa$-Lipschitz in the previous sense if and only if $f$ is differentiable almost everywhere and $\Vert \nabla_a f \Vert _\infty \leq \kappa$. Hence from Theorem~\ref{theo:intert_a} we deduce that the space $\Lip (d_a) \cap \D (\cE _\mu)$ is stable by the semigroup $(P_t)_{t\geq 0}$ and moreover for every $t \geq 0$,
\[
\Vert \nabla_a P_t f \Vert _{\infty} \leq e^{-  \rho_a t} \, \Vert \nabla_a f \Vert _{\infty} ,
\]
where $\rho_a \in \R $ is a lower bound on the Feynman-Kac potential $V_a$. In particular if $\rho _a >0$ then the process is positive recurrent and the convergence to equilibrium holds exponentially fast in Wasserstein distance, cf. \cite{chen_li}.
\vspace{0.1cm}

Another remark is the following. If we choose $a=\sigma$ in \eqref{eq:intert_a} then our result fits with the classical Bakry-\'Emery theory. Let $\Gamma_2$ be the bilinear symmetric form defined on $\C ^\infty (\R) \times \C ^\infty (\R)$ by
\[
\Gamma _2 (f,g) = \frac{1}{2} \, \left( \L \Gamma (f,g) - \Gamma (f, \L g) - \Gamma (g, \L f) \right) .
\]
Then the Bakry-\'Emery criterion, which ensures the sub-commutation relation \eqref{eq:subcommut}, reads as follows: there exists some constant $\rho_\sigma \in \R$ such that for every $f\in \C ^\infty (\R)$,
\begin{equation}
\label{eq:BE}
\Gamma_2 (f,f) \geq \rho_\sigma \, \Gamma (f,f) .
\end{equation}
By simple computations one obtains
\[
\Gamma_2 (f,f) = \sigma^2 \, \left( \sigma f'' + \sigma' f \right) ^2 + V_\sigma  \, \Gamma(f,f).
\]
Therefore the best lower bound $\rho_\sigma$ leading to \eqref{eq:BE} is given by $\inf_{x\in \R} V_\sigma(x)$. \vspace{0.1cm}

As announced, the processes $X^a$ and $X$ can be interpreted as dual processes according to the so-called Doob's $h$-transform that we introduce now. Given a smooth positive function $h$, Doob's $h$-transform of the Feynman-Kac semigroup $(P_t^V)_{t\geq 0}$ with smooth potential $V$ consists in modifying it by ``multiplying inside and dividing outside" by the function $h$. In other words, we consider the new semigroup
$$
{P_t^V} ^{(h)} f := \frac{P_t^V (hf)}{h} .
$$
If $\L ^V := \L - V$ stands for the Schr\"odinger operator associated to $(P_t^V)_{t\geq 0}$, then the generator of $({P_t^V} ^{(h)})_{t\geq 0}$ is thus given by ${\L^V} ^{(h)} f = \L^V (hf)/h $, which rewrites by the chain rule formula as the following Schr\"odinger operator with Feynman-Kac potential $\L h/h -V $:
\begin{eqnarray*}
{\L^V} ^{(h)} f & = & \L f + 2 \, \frac{\Gamma (h,f)}{h} + \left(\frac{\L h}{h}-V\right)  \, f \\
& = & \sigma ^2 \, f'' + \left( b + 2\sigma ^2 \, \frac{h'}{h} \right) \, f' + \left(\frac{\L h}{h}-V\right)  \, f.
\end{eqnarray*}
When $V=0$, it is known that Doob's $h$-transform is Markov if and only if $h$ is $\L$-harmonic, i.e. $\L h = 0$. Moreover it exhibits the following group structure: if $h$ and $k$ are two smooth positive functions, then the $hk$-transform is nothing but the $h$-transform of the $k$-transform. In particular the $h$-transform and the original dynamics have the same distribution if and only if $h$ is constant. \vspace{0.1cm}

Returning to the process $X^a$, recall that the drift $b_a$ is given by
$$
b_a = b + 2 \sigma ^2 \, \frac{h'}{h}, \quad \mbox{ with } \quad h:= \sigma/a .
$$
Therefore this leads to the following interpretation of the intertwining relation \eqref{eq:intert_a}:  \vspace{0.1cm}

1 - first perform the classical intertwining \eqref{eq:intert} by using the method of the tangent process, so that we obtain a Feynman-Kac semigroup $(P_t^V g)_{t\geq 0}$ with $g = \nabla_\sigma f$ and potential $V = V_\sigma = \L \sigma /\sigma -b'$; \vspace{0.1cm}

2 - then apply Doob's $h$-transform with $h:= \sigma /a$ to obtain the desired result. In particular, the Feynman-Kac potential $V_a$ appears in \eqref{eq:intert_a} because the following identity holds:
\begin{equation}\label{eq:h-transf}
{\L^{V_\sigma}} ^{(\sigma /a)} f = \L _a ^{V_a} f,
\end{equation}
and in particular at the level of the Feynman-Kac potentials,
\[
\frac{\L _a (a)}{a} - b' \quad = \underbrace{ \frac{\L \sigma}{\sigma} - b'} _{\mbox{classical intertwining}} - \underbrace{\frac{\L  (\sigma/a)}{\sigma /a} . }_{\mbox{Doob's $\sigma /a$ -transform}}
\]

Let us say some words about the potential extension of the intertwining approach. As we will see below, it is possible to adapt the proof of Theorem~\ref{theo:intert_a} for diffusions on a compact interval $[\alpha, \beta]$. Under assumption $\AA$ restricted to $[\alpha, \beta]$ (in particular the completeness hypothesis is removed and the smooth functions $b$ and $\sigma$ are also assumed to be smooth at the boundaries $\alpha$ and $\beta$), the operator $(\L , \C ^\infty _0 (\R))$ is no longer essentially self-adjoint in $L^2 (\mu)$, where $\mu$ is the restriction to $[\alpha, \beta]$ of the original measure defined on $\R$, thus it admits different self-adjoint extensions. To overcome this problem, one needs to impose boundary conditions. For instance the choice of Neumann boundary conditions allows the semigroup to be Markov, in contrast to the one involved for instance with Dirichlet boundary conditions which is only sub-Markov, i.e. the explosion time is finite almost surely. In particular the Neumann semigroup is stable on functions with 0 derivative at the boundary whereas the Dirichlet semigroup is stable on functions vanishing at the boundary, similarly to the one corresponding to the one-point compactification space $\R \cup \{ \Delta \}$ emphasized above. At the level of the process, the Neumann diffusion corresponds to the process reflected at the boundary whereas the Dirichlet diffusion is the process killed at the boundary. \vspace{0.1cm}

To illustrate the discussion, let us consider the basic example of Brownian motion (with speed 2) on the interval $[0,1]$. If $(P_t)_{t\geq 0}$ and $(\tilde{P_t})_{t\geq 0}$ stand for the Neumann and Dirichlet semigroups respectively, then both solve the heat equation
\[
\left \{
\begin{array}{lll}
\partial_t u & = & \partial _x ^2 u \\
u(\cdot ,0) & = & f ,
\end{array}
\right.
\]
for every smooth enough function $f$. Then boundary conditions allow us to identify the underlying kernels. If $q$ stands for the heat kernel on $\R$, that is,
\[
q_t (x,y) = \frac{1}{\sqrt{4 \pi t }} \, e^{- \frac{(x-y)^2}{4t}} , \quad x,y \in \R ,
\]
then the kernels on $[0,1]$ can be constructed with respect to the kernel $q$: for every $x,y\in [0,1]$ we have for the Neumann semigroup
$$
p_t (x,y) = \sum_{k\in \Z} \left( q_t (x,y+2k) + q_t (x,-y+2k) \right),
$$
whereas for the Dirichlet semigroup,
$$
\tilde{p_t} (x,y) = \sum_{k\in \Z} \left( q_t (x,y+2k) - q_t (x,- y+2k) \right) .
$$
Then simple computations yield to the intertwining relation between the two semigroups:
\[
( P_t f ) ' = \tilde{P_t} (f') ,
\]
which leads by Jensen's inequality to the sub-commutation
\[
\left \vert ( P_t f ) ' \right \vert \leq \tilde{P_t} (\left \vert f' \right \vert ) \leq P_t (\left \vert f' \right \vert ) .
\]
As announced, such a result is true in a more general situation, as suggested by the following result. Denote $\D (\cE _\mu)$ the domain of the closure of the pre-Dirichlet form $\cE _\mu$ defined initially on the space of $\C^\infty $ real-valued functions $f$ on $[\alpha,\beta]$ with vanishing derivative at the boundary, and let $\C _+ ^\infty ([\alpha, \beta])$ be the set of positive smooth functions on the interval $[\alpha, \beta]$.
\begin{theo}
\label{theo:neumann}
Assume $\AA$ (restricted to $[\alpha, \beta]$) and that $V_a$ is bounded from below on $[\alpha,\beta]$ for some function $a \in \C _+ ^\infty ([\alpha, \beta])$. Letting $f\in \D (\cE _\mu )$, then the following intertwining relation holds:
\[
\nabla _a P_t f = \tilde{P} _{a,t} ^{V_a} \nabla _a f , \quad t \geq 0,
\]
where $(P_t)_{t\geq 0}$ and $(\tilde{P}_{a,t})_{t\geq 0}$ denote the Neumann and Dirichlet semigroups associated to the processes $X$ and $X^a$, respectively. In particular we have the inequality
\[
\left \vert \nabla _a P_t f \right \vert \leq P_{a,t} ^{V_a} \left( \vert \nabla_a f \vert \right), \quad t \geq 0,
\]
where $(P_{a,t})_{t\geq 0}$ stands for the Neumann semigroup associated to $X^a$.
\end{theo}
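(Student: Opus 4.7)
The plan is to mimic the proof of Theorem~\ref{theo:intert_a} step by step, the only delicate point being to keep track of how the two boundary regimes interact. The generator-level intertwining $\nabla_a \L f = \L_a^{V_a} \nabla_a f$ is a purely algebraic identity on $\C^\infty ([\alpha,\beta])$ and so transfers to the restricted setting without change. The key geometric observation is that the weighted gradient $\nabla_a = a\,\partial_x$ interchanges the two boundary conditions: if $f \in \C^\infty([\alpha,\beta])$ has $f'(\alpha) = f'(\beta) = 0$ (the Neumann class, preserved by $(P_t)_{t\geq 0}$), then $\nabla_a f$ vanishes at $\alpha$ and $\beta$, placing it in the Dirichlet class naturally associated with $(\tilde P_{a,t})_{t\geq 0}$.

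With this in mind, I would define $J(t) := \nabla_a P_t f$. The $L^2(\mu_a)$-bound goes through exactly as in the proof of Theorem~\ref{theo:intert_a}: integration by parts on $[\alpha,\beta]$ introduces a boundary term proportional to $P_t f \cdot (P_t f)' \cdot \sigma^2 h$ at the endpoints, which vanishes thanks to the Neumann condition on $P_t f$. This yields $\Vert J(t) \Vert_{L^2(\mu_a)}^2 \leq \cE_\mu (f,f) < +\infty$ as well as $J(0) = \nabla_a f$ in $L^2(\mu_a)$. Differentiating in time and invoking the generator-level intertwining then gives $\partial_t J(t) = \L_a^{V_a} J(t)$ in $L^2(\mu_a)$.

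Because $J(t)$ vanishes at $\alpha$ and $\beta$ for every $t \geq 0$, it lives in the Dirichlet realization of $\L_a^{V_a}$. A Dirichlet-boundary analogue of Lemma~\ref{lemme:schro}, proved by the same combination of essential self-adjointness plus the spectral theorem, identifies $J(t)$ with the unique solution to the Schrödinger Cauchy problem in that domain, namely $\tilde{P}_{a,t}^{V_a} \nabla_a f$. To conclude with the pointwise inequality, I would use the classical domination $\vert \tilde{P}_{a,t}^{V_a} g \vert \leq P_{a,t}^{V_a} \vert g \vert$: the Dirichlet process is obtained from the Neumann one by killing at the boundary, so its transition kernel (and hence the Feynman-Kac kernel with the same potential $V_a$) is pointwise dominated by its Neumann counterpart, just as already seen in the Brownian example given before the theorem.

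The main obstacle I anticipate lies in two technical points. First, one must secure enough regularity up to the boundary to ensure $(P_t f)'(\alpha) = (P_t f)'(\beta) = 0$ holds pointwise for every $t > 0$, not only in an a.e. sense; a spectral expansion of $f$ in the Neumann eigenbasis of $\L$, whose eigenfunctions are smooth up to the boundary on a compact interval with smooth coefficients, legitimates both this and the termwise differentiation under the semigroup. Second, one must write down the Dirichlet counterpart of Lemma~\ref{lemme:schro}, which amounts to checking that $\L_a^{V_a}$ endowed with Dirichlet conditions at $\alpha$ and $\beta$ is essentially self-adjoint on $\C^\infty_0((\alpha,\beta))$; this is standard in the compact-interval setting with smooth data and a potential bounded from below, but deserves to be stated explicitly.
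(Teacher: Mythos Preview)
Your proposal is correct and follows essentially the same route as the paper: define $J(t)=\nabla_a P_t f$, observe that the Neumann condition forces $J(t)$ to vanish at $\alpha$ and $\beta$, check that $J$ solves the Schr\"odinger equation $\partial_t J=\L_a^{V_a}J$ with Dirichlet boundary data, and invoke uniqueness to identify it with $\tilde P_{a,t}^{V_a}\nabla_a f$. The only cosmetic difference is that the paper appeals to the maximum principle for uniqueness rather than to a Dirichlet analogue of Lemma~\ref{lemme:schro}, and it leaves the final domination $\vert\tilde P_{a,t}^{V_a}g\vert\le P_{a,t}^{V_a}\vert g\vert$ implicit.
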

\begin{proof}
The proof is similar to the one provided for Theorem~\ref{theo:intert_a}. Indeed, since the Neumann semigroup $(P_t)_{t\geq 0}$ satisfies $\nabla _a P_t f (\alpha) = \nabla _a P_t f (\beta) = 0$, we deduce that the same function $J$ defined by $J(t) := \nabla_a P_t f$ is a solution to the Schr\"odinger equation
\[
\left \{
\begin{array}{lll}
\partial_t u & = & \L_a ^{V_a} u \\
u(\cdot ,0) & = & \nabla_a f
\end{array}
\right.
\]
with the additional Dirichlet boundary conditions $u(\alpha, t) = u(\beta, t) = 0$. Then the uniqueness of the solution to this equation (as a consequence of the maximum principle), achieves the proof.
\end{proof}

Another possible extension of the intertwining method emphasized in Theorem~\ref{theo:intert_a} might be performed with respect to the dimension. Indeed, one would expect in this case a generator in the right-hand-side in \eqref{eq:commut_a} acting on 1-forms and not on functions. However the transfer at the level of the semigroups is not so clear. Some work have been done in this direction for Brownian motion on a Riemannian manifold through the so-called Weitzenb\"ock formula, which involves the Laplace-Beltrami operator (the generator of the Brownian motion), the Hodge Laplacian (an operator commuting with the gradient) and the Ricci curvature transform (the potential, or zero order operator) \cite{hsu, qian}. See also the litterature on semi-classical analysis of the Witten Laplacian \cite{helffer}, which has been introduced by Witten \cite{witten} by distorting the Hodge Laplacian with a Morse function. \vspace{0.1cm}

Before turning to our second main result, let us recall the following $L^\infty$ parabolic comparison principle, for instance on the basis of \cite{baudoin-garofalo}. Remember that for a given $a \in \C _+ ^\infty (\R)$ the potential $V_a$ is defined by
\[
V_a := \frac{\L _a (a)}{a} - b' .
\]
\begin{lemme}
\label{lemme:parabolic}
Assume $\AA$ and that $V_a$ is bounded from below for some $a\in \C _+ ^\infty (\R)$. Assume moreover that the process $X^a$ is non-explosive. Given a finite time horizon $t>0$, let $u = u_s(x)$ be a smooth bounded function on $\R \times [0,t]$. If the inequality
\[
\partial_t u + \L_a ^{2V_a} u  \geq 0
\]
holds on $\R \times [0,t]$, then we have
\[
P_{a,t} ^{2V_a} u_t \geq u _0 .
\]
\end{lemme}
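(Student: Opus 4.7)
The plan is the standard parabolic comparison argument: interpolate between $u_0$ and $P_{a,t}^{2V_a} u_t$ by a time-dependent quantity and show it is monotone along $[0,t]$. Concretely, I would introduce
$$
\phi(s) := P_{a,s}^{2V_a} u_s , \qquad s \in [0,t] ,
$$
so that $\phi(0) = u_0$ and $\phi(t) = P_{a,t}^{2V_a} u_t$. The lemma then reduces to checking that $s \mapsto \phi(s)$ is non-decreasing on $[0,t]$.

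Differentiating in $s$ and using that $\L_a^{2V_a}$ generates $(P_{a,s}^{2V_a})_{s\geq 0}$ and commutes with it on its domain gives the key identity
$$
\partial_s \phi(s) \; = \; \L_a^{2V_a} \bigl( P_{a,s}^{2V_a} u_s \bigr) + P_{a,s}^{2V_a}\bigl(\partial_s u_s\bigr) \; = \; P_{a,s}^{2V_a}\bigl(\partial_s u_s + \L_a^{2V_a} u_s\bigr).
$$
The hypothesis $\partial_s u_s + \L_a^{2V_a} u_s \geq 0$ combined with the positivity-preserving property of the Feynman-Kac semigroup, which is evident from the probabilistic representation
$$
P_{a,s}^{2V_a} g(x) \; = \; \E_x\!\left[ g(X^a_s) \, \exp\!\Bigl( -2\int_0^s V_a(X^a_r)\,dr \Bigr) \right]
$$
(well defined since $X^a$ is non-explosive and the lower bound $V_a \geq \rho_a$ makes the exponential uniformly bounded by $e^{-2\rho_a s}$), then yields $\partial_s \phi \geq 0$. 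Integrating over $[0,t]$ delivers $\phi(t) \geq \phi(0)$, i.e. $P_{a,t}^{2V_a} u_t \geq u_0$.

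The main technical obstacle is the rigorous justification of the time derivative above. Although $u$ is smooth and bounded, the function $\L_a^{2V_a} u_s = \L_a u_s - 2V_a u_s$ need not be bounded when $V_a$ is unbounded from above, so the commutation $\partial_s P_{a,s}^{2V_a} g = P_{a,s}^{2V_a} \L_a^{2V_a} g$ and the product-rule differentiation must be handled with care. A clean way is to apply the formal computation to localized approximations of $u$ (for instance $\chi_n u$ with smooth cutoffs adapted to the sequence of stopping times $\tau_n$) and to pass to the limit by dominated convergence, leveraging both the non-explosion of $X^a$ and the uniform bound on the Feynman-Kac weight; this is precisely the route underlying the comparison principle of \cite{baudoin-garofalo} invoked by the authors.
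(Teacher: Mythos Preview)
Your proposal is correct and is the semigroup-theoretic presentation of the same argument the paper gives probabilistically. The paper does not differentiate $s\mapsto P_{a,s}^{2V_a}u_s$ at the semigroup level; instead it applies It\^o's formula directly to the process $s\mapsto e^{-2\int_0^s V_a(X^a_r)\,dr}\,u(X^a_s,s)$, obtains
\[
e^{-2\int_0^{t} V_a\,dr}\,u(X^a_{t},t)\;=\;u(X^a_0,0)+M_t+\int_0^{t}\bigl(\partial_s u+\L_a^{2V_a}u\bigr)(X^a_s,s)\,ds\;\ge\;u(X^a_0,0)+M_t,
\]
localizes with the exit times $\tau_n=\inf\{s:|X^a_s|\ge n\}$ to turn the local martingale $M$ into a true martingale, takes expectations, and passes to the limit by dominated convergence (using boundedness of $u$ and the lower bound on $V_a$). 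This is precisely the rigorous implementation of the differentiation-plus-localization you sketch: It\^o's formula is the pathwise version of your identity $\partial_s\phi=P_{a,s}^{2V_a}(\partial_s u+\L_a^{2V_a}u)$, and the stopping-time truncation replaces your spatial cutoffs $\chi_n$. The advantage of the paper's route is that the ``commutation'' issue you flag simply does not arise---on $\{s\le\tau_n\}$ the process lives in a compact set, so all quantities are bounded and no domain questions for the semigroup need to be addressed.
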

\begin{proof}
Since the process is non-explosive, the sequence of stopping times
$$
\tau_n := \inf \{ t\geq 0 : \vert X^a _t \vert \geq n \} , \quad n \in \N ,
$$
goes to infinity almost surely as $n$ tends to infinity. By It\^o's formula and our assumption, we have for every $t\geq 0$,
\begin{eqnarray*}
e^{ - \int_0 ^t 2 V_a(X_s ^a) \, ds} \, u( X^a _t , t) & = & u (X^a _0 , 0) + M_{t} + \int _0 ^{t} \left( \L _a ^{2 V_a} + \partial _s u \right) (X^a _s , s) \, ds \\
& \geq & u (X^a _0 , 0) + M_{t},
\end{eqnarray*}
where $M$ is a local martingale. Hence the stopped process $(M_{t\wedge \tau_n})_{t\geq 0}$ is a true martingale and taking expectation, we get
\begin{eqnarray*}
\E _x \left[ e^{ - \int_0 ^{t\wedge \tau_n} 2 V_a(X_s ^a) \, ds} \, u( X^a _{t\wedge \tau_n} , {t\wedge \tau_n}) \right] & \geq  & u (x, 0) .
\end{eqnarray*}
Since $u$ is bounded and $V_a$ is bounded from below the dominated convergence theorem entails as $n\to +\infty$ the inequality
\begin{eqnarray*}
\E _x \left[ e^{ - \int_0 ^{t} 2 V_a(X_s ^a) \, ds} \, u( X^a _{t} , {t}) \right] & \geq  & u (x, 0) ,
\end{eqnarray*}
from which the desired result follows.
\end{proof}
Now we can state our bivariate convex version of Theorem~\ref{theo:intert_a}, which will be useful when dealing with other functional inequalities than Poincar\'e inequality, for which Theorem~\ref{theo:intert_a} will be sufficient. Let $\I$ be an open interval of $\R$ and denote $\mathcal{C}_\I$ the set of smooth convex functions $\varphi:\I \to\R$ such that $\varphi '' >0$, $\varphi '''$ is of constant sign and $-1/\varphi ''$ is convex on $\I $. For a given function $\vphi \in \mathcal{C}_\I$, we define the non-negative bivariate function $\Theta $ by
\begin{equation}
\label{eq:theta}
\Theta (x,y) := \vphi ''(x) \, y ^2, \quad (x,y) \in \I \times \R .
\end{equation}
By Theorem~4.4 in \cite{chafai2}, $\Theta$ is convex on $\I \times \R$. Some interesting examples of such functionals will be given in the next part. Since $\I$ may not include 0, define the set $\C ^\infty _0 (\R, \I)$ of functions $f : \R \to \I$ such that $f' \in \C ^\infty _0 (\R)$, which will play the role of smooth and compactly supported functions.
\begin{theo}
\label{theo:bicommutation} Assume $\AA$ and that $V_a$ is bounded from below for some $a\in \C _+ ^\infty (\R)$. Assume moreover that the processes $X$ and $X^a$ are non-explosive. Let $f \in \C ^\infty _0 (\R, \I)$ be such that for every $t\geq 0$,
\begin{equation}
\label{eq:assump}
\left( \frac{\sigma}{a} \right) ' \, \vphi ''' (P_t f)  \, (P_t f)'  \geq 0.
\end{equation}
Then we have the sub-intertwining inequality
\begin{equation}
\label{eq:bicommutation}
\Theta \left( P_t f , \nabla _a P_t f \right) \leq P_{a,t} ^{2V_a} \Theta (f,\nabla _a f) , \quad t\geq 0 .
  \end{equation}
\end{theo}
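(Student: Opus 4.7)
The plan is to apply the parabolic comparison principle of Lemma~\ref{lemme:parabolic} to the function $u_s(x) := \Theta\bigl(P_{t-s} f(x),\, \nabla_a P_{t-s} f(x)\bigr)$ for $s \in [0,t]$. Since $u_0 = \Theta(P_t f, \nabla_a P_t f)$ and $u_t = \Theta(f, \nabla_a f)$, the inequality \eqref{eq:bicommutation} will follow once the differential inequality $\partial_s u + \L_a^{2V_a} u \geq 0$ is established on $\R\times[0,t]$. The boundedness of $u$ needed to invoke the lemma is automatic: because $f' \in \C_0^\infty(\R)$, the function $f$ is constant outside a compact set and, since $X$ is non-explosive, $P_{t-s}f$ stays in a compact subinterval of $\I $ on which $\varphi''$ is bounded; moreover $\Vert \nabla_a P_{t-s}f\Vert_\infty \leq e^{-\rho_a(t-s)}\,\Vert \nabla_a f\Vert_\infty$ follows from Theorem~\ref{theo:intert_a} combined with a lower bound $V_a \geq \rho_a$.

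Next I would carry out the generator-level computation. Writing $G := P_{t-s}f$ and $H := \nabla_a G$, the semigroup equation yields $\partial_s G = -\L G$ and, via the commutation \eqref{eq:commut_a}, $\partial_s H = -\L_a^{V_a} H$. Applying the chain and product rules to $u = \varphi''(G)\,H^2$ gives
\begin{equation*}
\L_a u = \bigl[\varphi'''(G)\,\L_a G + \varphi''''(G)\,\Gamma(G,G)\bigr] H^2 + 2\varphi''(G)\bigl[H\,\L_a H + \Gamma(H,H)\bigr] + 4\varphi'''(G)\,H\,\Gamma(G,H).
\end{equation*}
Adding $\partial_s u$, using the identity $\L_a G - \L G = (b_a - b)\,G' = 2\sigma^2\,(h'/h)\,G'$ with $h := \sigma/a$, and cancelling $2V_a u$ against the Feynman-Kac contribution coming from $\L_a^{V_a} H$, one is left with
\begin{equation*}
\partial_s u + \L_a^{2V_a} u \;=\; 2\sigma^2\,\frac{h'}{h}\,\varphi'''(G)\,G'\,H^2 \;+\; \sigma^2\bigl[\varphi''''(G)\,(G')^2\,H^2 + 4\varphi'''(G)\,G'\,H\,H' + 2\varphi''(G)\,(H')^2\bigr].
\end{equation*}

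The first summand is non-negative by hypothesis \eqref{eq:assump}, since $h>0$. The bracketed expression is a quadratic form in $H'$ with positive leading coefficient $2\varphi''(G)$ and discriminant proportional to $2(\varphi''')^2 - \varphi''\varphi''''$, which is non-positive: indeed a direct computation gives $(-1/\varphi'')'' = [\varphi''\varphi'''' - 2(\varphi''')^2]/(\varphi'')^3$, so the convexity of $-1/\varphi''$ on $\I $ forces $\varphi''\varphi'''' \geq 2(\varphi''')^2$. Hence $\partial_s u + \L_a^{2V_a} u \geq 0$ on $\R\times[0,t]$, and Lemma~\ref{lemme:parabolic} delivers \eqref{eq:bicommutation}. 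The main delicate step I anticipate is the bookkeeping in the generator calculation and the verification that $u$ has enough regularity and decay to legitimately apply the parabolic principle; the conceptual content lies in the interplay between the sign condition \eqref{eq:assump}, which absorbs the first-order defect arising from $\L_a \neq \L$, and the convexity of $-1/\varphi''$, which handles the residual \emph{$\Gamma_2$-type} quadratic form.
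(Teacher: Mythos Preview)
Your proposal is correct and follows essentially the same route as the paper: apply Lemma~\ref{lemme:parabolic} to $u_s=\Theta(P_{t-s}f,\nabla_a P_{t-s}f)$, compute $\partial_s u+\L_a^{2V_a}u$ via the intertwining relation~\eqref{eq:commut_a}, and split the result into a term controlled by the sign hypothesis~\eqref{eq:assump} and a non-negative remainder coming from convexity. The only cosmetic difference is that the paper packages the second part as the abstract inequality $\L_a\Theta(F,G)\geq\partial_x\Theta\,\L_a F+\partial_y\Theta\,\L_a G$ (invoking the bivariate convexity of $\Theta$ established in~\cite{chafai2}), whereas you verify the same fact directly via the discriminant condition $\varphi''\varphi''''\geq 2(\varphi''')^2$ implied by the convexity of $-1/\varphi''$.
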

\begin{proof}
The proof is somewhat similar to that of Theorem~\ref{theo:intert_a}, except that it requires the $L^\infty$ parabolic comparison principle of Lemma~\ref{lemme:parabolic} because of the additional ingredient of convexity. For every $t>0$, define the function
\[
s\in[0,t]\mapsto J (s) :=  \Theta (P_{t-s} f,\nabla _a P_{t-s} f) .
\]
Since $f \in \C ^\infty _0 (\R, \I)$, $f$ is valued in a compact interval $[m,M] \subset \I$ and belongs to the space $\Lip (d_a)$. Moreover $X$ is non-explosive and thus from the identity $P_t 1 = 1$ one deduces that $P_t f$ is also valued in $[m,M]$, inducing the boundedness of the function $\phi '' (P_t f)$. Hence by Theorem~\ref{theo:intert_a} the function $J$ is bounded on $\R \times [0,t]$. Therefore the desired conclusion will hold once we have established that $J$ satisfies the inequality of Lemma~\ref{lemme:parabolic}. \vspace{0.1cm}

Using the intertwining relation \eqref{eq:commut_a}, we have
\begin{eqnarray*}
\L_a^{2 V_a} J(s) + \partial_s J(s) & = & \L_a^{2 V_a} \Theta (P_{t-s} f,\nabla _a P_{t-s} f) - \partial _x \Theta (P_{t-s} f,\nabla_a P_{t-s} f) \, \L P_{t-s} f \\
& & - \partial _y \Theta (P_{t-s} f,\nabla_a P_{t-s} f) \, \L _a ^{V_a} \nabla_a P_{t-s} f.
\end{eqnarray*}
Since $\Theta$ is a bivariate convex function we have
$$
\L _a \Theta (F,G) \geq \partial _x \Theta (F,G) \, \L _a F + \partial _y \Theta (F,G) \, \L _a G ,
$$
for every smooth functions $F,G$ and thus we finally obtain
\begin{eqnarray*}
\L_a^{2 V_a} J(s) + \partial_s J(s) & \geq & \partial _x \Theta (P_{t-s} f,\nabla_a P_{t-s} f) \, \left( \L _a P_{t-s} f - \L P_{t-s} f \right) \\
    & & + V_a \, \left( \partial _y \Theta (P_{t-s} f,\nabla_a P_{t-s} f) \, \nabla_a P_{t-s} f - 2 \, \Theta (P_{t-s} f,\nabla_a P_{t-s} f)\right) \\
    & = & 2 \, \sigma \, \left( \frac{\sigma}{a} \right) ' \, \vphi ''' (P_{t-s} f) \, (\nabla _a P_{t-s} f) ^3 \\
    & \geq & 0.
  \end{eqnarray*}
The proof is complete.
\end{proof}
Let us comment the previous result. As expected, the case $(\sigma /a )'= 0$ is related to the Bakry-\'Emery criterion, cf. the discussion above. In particular no assumption on the monotonicity of $f$ is required, as in the case when $\vphi$ is polynomial of degree 2, for which Theorem~\ref{theo:bicommutation} is a straightforward consequence of Theorem~\ref{theo:intert_a} and Jensen's inequality. Actually, the interesting cases are the ones for which \eqref{eq:assump} requires some restrictions on the functions $f, \sigma$ and $a$. For instance the convex functions $\vphi \in \C _\I$ we have in mind for the applications in Section~\ref{sect:appli} are $\vphi (x) := x \log x$ or $\vphi (x) = x^p$, $p\in (1,2)$ with both $\I = (0 ,\infty)$. Such functionals have negative third derivative on $\I$ and thus \eqref{eq:assump} means that we have to compare the monotonicity of $f$ and $\sigma /a$ since by Theorem~\ref{theo:intert_a}, $f$ and $P_t f$ are comonotonic functions.

\section{Application to functional inequalities}
\label{sect:appli} \textrm{} \indent
In this part we apply our main results Theorems~\ref{theo:intert_a} and \ref{theo:bicommutation} to functional inequalities. In particular the intertwining approach allows us on the one hand to recover the famous variational formula of Chen and Wang on the spectral gap \cite{chen_wang2}, and on the other hand to establish a restricted version of $\vphi$-entropy inequalities like logarithmic Sobolev or Beckner inequalities. \vspace{0.1cm}

Assume that the measure $\mu$ is a probability measure. Letting $\vphi \in \mathcal{C}_\I$, we define the $\varphi$-entropy of a function $f:\R \to\I$ such that $\vphi (f) \in L^1 (\mu)$ as
\[
\Ent_\mu^\vphi (f) := \mu \left( \vphi (f) \right) - \vphi \left( \mu (f) \right),
\]
where $\mu (g)$ stands for the integral of $g$ with respect to $\mu$. Note that the functional $\Ent_\mu^\vphi (f)$ is well-defined and non-negative by convexity of the function $\vphi$. Denote $H^1 _\vphi$ the set of $\I$-valued functions $f\in \D(\cE_\mu)$ such that $\vphi '(f) \in \D(\cE_\mu)$. We say that the $\varphi$-entropy inequality is satisfied with constant $c>0$ if for every $f\in H^1 _\vphi$,
\begin{equation}
\label{eq:phi_entropy}
c\, \Ent_\mu^\vphi (f) \leq \cE_\mu \left( f, \vphi '(f) \right).
\end{equation}
See for instance \cite{chafai2} for a careful study of the properties of $\varphi$-entropies. In particular the previous inequality can be rewritten as
\[
c\, \Ent_\mu^\vphi (f) \leq \int _\R \Theta ( f, \nabla_\sigma f) \, d\mu ,
\]
where $\Theta$ is the bivariate function defined in \eqref{eq:theta}. The $\varphi$-entropy inequality \eqref{eq:phi_entropy} is satisfied if and only if the following dissipation of the semigroup holds: for every $\I$-valued function $f$ such that $\vphi (f) \in L^1 (\mu)$ and every $t\geq0$,
\[
\Ent_\mu^\vphi (P_t f) \leq e^{-ct} \, \Ent_\mu^\vphi (f).
\]
As announced, below are listed some basic examples of $\vphi$-entropy inequalities. First we obtain the Poincar\'e inequality when $\vphi (x) := x^2$ with $\I = \R$:
\[
\CP \, \Var_\mu (f) \leq 2\, \cE _\mu (f,f),
\]
where $\Var_\mu (f) := \mu (f^2)-\mu(f)^2$ is the variance of $f$ under $\mu$. The optimal (largest) constant $\lambda_1 = \CP /2$ is the spectral gap in $L^2(\mu)$ of the operator $- \L$, i.e.
\begin{equation}
\label{eq:lambda1}
\lambda_1 = \inf _{f\in \D (\cE _\mu)} \frac{\cE _\mu (f,f)}{\Var _\mu (f)}.
\end{equation}
The spectral gap governs the $L^2(\mu)$ exponential decay to the equilibrium of the semigroup. On the other hand when $\vphi (x) := x \log x$ with $\I = (0,\infty)$ we obtain the logarithmic Sobolev inequality (or log-Sobolev inequality)
\[
\CLS \, \Ent_\mu (f) \leq \cE _\mu (f,\log f),
\]
where $\Ent_\mu (f) := \mu (f\, \log f)-\mu(f) \, \log \mu (f) $ is the entropy of $f$ under $\mu$. Such a functional inequality, which was originally introduced by Gross \cite{gross} to study hypercontractivity properties, is related to the entropy dissipation of the semigroup and is stronger than the Poincar\'e inequality (we have $\CLS \leq \CP$). Finally the third example we have in mind is the Beckner inequality which is obtained when considering the function $\varphi(x) = x^p $ with $p\in (1,2)$ and $I=(0,\infty)$. We have in this case
\[
\CB \, \left( \mu (f^p) - \mu(f) ^p \right) \leq p \, \cE _\mu \left( f,f^{p-1}\right) .
\]
Estimating the best constant in this inequality gives the decay in $L^p(\mu)$. Such an inequality was introduced by Beckner \cite{beckner} for the Gaussian measure under an alternative, but equivalent, formulation. Moreover it interpolates between Poincar\'e and log-Sobolev since it reduces to Poincar\'e if $p \to 2$, whereas we obtain log-Sobolev when dividing both sides by $p-1$ and taking the limit as $p$ goes to 1. \vspace{0.1cm}

As we have seen in Theorem~\ref{theo:bicommutation} above, the constant sign of the function $\vphi '''$ is of crucial importance. In particular this is the case for the three previous examples of functions $\vphi$. However there exist convex functions $\vphi$ satisfying all the assumptions provided in the very definition of $\C _\I$ except this point. An example is the opposite of the Gaussian isoperimetric function, that is,
\[
\varphi := - F' \circ F^{-1}  : (0,1) \to \R ,
\]
where $F$ is the Gaussian cumulative function $ F(t):=\int_{-\infty}^{t} e^{-x^2 /2}  \, dx/\sqrt{2\pi}$. Using the well-known relation $\varphi \, \varphi ''=-1$, it is straightforward to see that $\varphi '' >0$, that $-1/\vphi ''$ is convex and that $\varphi ''' (1/2) = 0$ with $\varphi '''$ negative on $(0,1/2)$ and positive on $(1/2,1)$. \vspace{0.1cm}

Let us start by the Poincar\'e inequality. Several years ago, Chen and Wang \cite{chen_wang2} used a coupling technique to establish a convenient variational formula on the spectral gap. In particular, the important point is that it provides in general ``easy-to-verify" conditions ensuring the existence of a spectral gap for the dynamics, together with qualitative estimates. Our next result allows us to recover simply this formula by using Theorem~\ref{theo:intert_a}. Recall that the potential $V_a$ is defined by
$$
V_a := \frac{\L _a (a)}{a} - b' ,
$$
for some function $a \in \C ^\infty _+ (\R)$, and define
\[
\rho_a := \inf_{x\in \R} V_a (x) ,
\]
when the infimum exists. Recall that if $\rho_a >0$ then the non-explosive process $X$ is positive recurrent and thus $\mu$ is normalized to be a probability measure. Note that $\sup_{a \in \C ^\infty _+ (\R)} \, \rho _a$ is always non-negative since $V_a$ is identically 0 when choosing $a := e^{-U}$, where $U$ is given in \eqref{eq:U}.
\begin{theo}[Chen-Wang~\cite{chen_wang2}]
\label{theo:spectral}
Assume that there exists some function $a \in \C _+ ^\infty (\R)$ such that $\rho_a >0$. Then the operator $-\L$ admits a spectral gap $\lambda_1$. More precisely the following formula holds:
\begin{equation}
\label{eq:chen_wang}
\lambda_1 \geq \sup_{a \in \C _+ ^\infty (\R)} \, \rho_a .
\end{equation}
In particular the equality holds if $\lambda_1$ is an eigenvalue of $-\L$.
\end{theo}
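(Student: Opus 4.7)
The plan is to deduce Poincar\'e's inequality from Theorem~\ref{theo:intert_a} via a standard semigroup integration argument, then exhibit an optimal weight $a$ to achieve equality when $\lambda_1$ is an eigenvalue.

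Fix $a\in\C_+^\infty(\R)$ with $\rho_a>0$. Two preliminary observations are needed. First, the identity $d\mu_a=(\sigma/a)^2\,d\mu$ gives, for every $g\in\D(\cE_\mu)$,
\[
\int_\R (\nabla_a g)^2\,d\mu_a \;=\; \int_\R \sigma^2 (g')^2\,d\mu \;=\; \cE_\mu(g,g).
\]
Second, since $-\L_a\geq 0$ on $L^2(\mu_a)$ and multiplication by $V_a$ is bounded below by $\rho_a$, the self-adjoint operator $-\L_a^{V_a}=-\L_a+V_a$ has spectrum contained in $[\rho_a,+\infty)$, so the Feynman-Kac semigroup satisfies $\|P_{a,t}^{V_a}\|_{L^2(\mu_a)\to L^2(\mu_a)}\leq e^{-\rho_a t}$. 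Combining these two facts with the intertwining relation \eqref{eq:intert_a} applied to $f\in\D(\cE_\mu)$ yields
\[
\cE_\mu(P_tf,P_tf) \;=\; \|P_{a,t}^{V_a}\nabla_a f\|_{L^2(\mu_a)}^2 \;\leq\; e^{-2\rho_a t}\,\cE_\mu(f,f).
\]

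Since $\rho_a>0$ forces the process to be positive recurrent and $\mu$ to be a probability measure, ergodicity yields $P_tf\to\mu(f)$ in $L^2(\mu)$. Using the standard identity $\partial_t\|P_tf\|_{L^2(\mu)}^2=-2\cE_\mu(P_tf,P_tf)$ and integrating over $[0,+\infty)$ produces
\[
\Var_\mu(f) \;=\; 2\int_0^{+\infty} \cE_\mu(P_tf,P_tf)\,dt \;\leq\; \frac{\cE_\mu(f,f)}{\rho_a},
\]
so that the variational formula \eqref{eq:lambda1} gives $\lambda_1\geq\rho_a$, and \eqref{eq:chen_wang} follows by optimizing over $a$.

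For the equality statement, let $g\in\C^\infty(\R)$ be an eigenfunction associated with $\lambda_1$, so $-\L g=\lambda_1 g$; by elliptic regularity under assumption $\AA$ one has $g\in\C^\infty(\R)$, and one-dimensional Sturm-Liouville oscillation theory applied to the first non-trivial eigenvalue guarantees that $g'$ has constant sign on $\R$. Setting $a:=1/|g'|\in\C_+^\infty(\R)$ and invoking the simplified formula $V_a=-(\L g)'/g'$ noted just after Theorem~\ref{theo:intert_a}, one reads off $V_a\equiv\lambda_1$, hence $\rho_a=\lambda_1$, saturating \eqref{eq:chen_wang}. The main obstacle lies in this equality case: one must check rigorously that the gap eigenfunction has a nowhere-vanishing derivative on $\R$ so that $a=1/|g'|$ is an admissible smooth strictly positive weight and the hypotheses of Theorem~\ref{theo:intert_a} are preserved; by contrast, the inequality itself is an essentially direct consequence of Theorem~\ref{theo:intert_a} combined with the $L^2(\mu_a)$-contractivity of the Feynman-Kac semigroup and the classical semigroup integration argument.
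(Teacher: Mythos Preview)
Your proof is correct and follows essentially the same route as the paper's: the semigroup integration of the variance combined with the intertwining relation of Theorem~\ref{theo:intert_a}, then the choice $a=1/g'$ for the equality case via the identity $V_a=-(\L g)'/g'$. The only cosmetic difference is that you bound $\|P_{a,t}^{V_a}\nabla_a f\|_{L^2(\mu_a)}$ directly via the spectral lower bound $-\L_a^{V_a}\geq\rho_a$, whereas the paper inserts a pointwise Cauchy--Schwarz step $(P_{a,t}^{V_a}\nabla_a f)^2\leq e^{-2\rho_a t}P_{a,t}((\nabla_a f)^2)$ followed by the $L^1(\mu_a)$-contraction of $(P_{a,t})_{t\geq 0}$; both yield the same estimate, and for the equality case the paper likewise defers the monotonicity of the eigenfunction to \cite{chen_wang2}.
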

\begin{proof}
Letting $f\in \D(\cE_\mu)$ we have
\begin{eqnarray*}
\Var_\mu(f) & = & -\int_\R \int_0^{+\infty} \partial_t (P_t f)^2 \, dt \, d\mu \\
            & = & -2 \, \int_0^{+\infty} \int_\R P_t f \, \L P_t f \, d\mu \, dt \\
            & = & 2 \, \int_0^{+\infty} \int_\R \left( \nabla_\sigma P_t f \right) ^2 \, d\mu \, dt \\
            & = & 2 \, \int_0^{+\infty} \int_\R \left( P_{a,t} ^{V_a} \nabla _a f \right) ^2 \, d\mu _a \, dt \\
            & \leq & 2 \, \int_0^{+\infty} e^{-2\rho_a t} \int_\R P_{a,t} \left( (\nabla _a f)^2 \right) \, d\mu_a \, dt \\
            & \leq & 2 \, \int_0^{+\infty} e^{-2\rho_a t} \, dt \, \int_\R (\nabla_a f)^2 \, d\mu_a \\
            & = & \frac{1}{\rho_a} \, \int_\R (\nabla_\sigma f)^2 \, d\mu .
\end{eqnarray*}
To obtain above the lines 3, 4, 5 and 6, we used the integration by parts formula, the intertwining relation of Theorem~\ref{theo:intert_a}, Cauchy-Schwarz's inequality and the contraction property in $L^1 (\mu_a)$ of the semigroup $(P_{a,t})_{t\geq 0}$, respectively. Therefore we get $\lambda_1 \geq \rho_a$ from which we obtain the desired inequality \eqref{eq:chen_wang}. \vspace{0.1cm}

Now let us prove that the equality holds in \eqref{eq:chen_wang} when $\lambda_1$ is an eigenvalue of $-\L$, i.e. the equation
\begin{equation}
\label{eq:eigen}
-\L g = \lambda_1 \, g ,
\end{equation}
admits a non-constant smooth solution $g \in L^2 (\mu)$. The key point is to choose conveniently the function $a$ with respect to the eigenvector $g$. By \cite{chen_wang2} we already know that $g' >0$ (or $g' <0$) on $\R$. To see that the supremum is attained in \eqref{eq:chen_wang}, we differentiate on both sides of \eqref{eq:eigen} and use the intertwining relation \eqref{eq:commut_a} at the level of the generators:
\[
\lambda _1 \, \nabla _a g = - \nabla _a \L g = - \L _a ^{V_a} (\nabla_a g) = - \L _a \nabla_a g + V_a \, \nabla _a g.
\]
In the equalities above $g$ is chosen such that $g' >0$. Choosing the function $a = 1/g'$ entails $\lambda _1 = V_a$ identically. The proof of \eqref{eq:chen_wang} is now complete.
\end{proof}

We mention that the monotonicity of the eigenvector $g$ associated to $\lambda_1$ might be obtained directly by Theorem~\ref{theo:intert_a}. Given a function $f\in \D (\cE _{\mu})$, denote $v_f \in \D (\cE _{\mu})$ the function corresponding to the total variation of $f$, i.e. $v_f$ is absolutely continuous with weak derivative $\vert f'\vert$. Then we have $\cE _{\mu} (v_f, v_f) = \cE _{\mu} (f,f)$ and we obtain from Theorem~\ref{theo:intert_a} applied with $a = \sigma$ and Jensen's inequality:
\begin{eqnarray*}
\vert \nabla _\sigma P_t f (x) \vert & \leq & \E _x \left[ \vert \nabla _\sigma f(X_t) \vert \, \exp \left( - \int_0 ^t V_\sigma (X_s)\, ds \right) \right] \\
& = & \E _x \left[ \vert \nabla _\sigma v_f (X_t) \vert \, \exp \left( - \int_0 ^t V_\sigma (X_s)\, ds \right) \right] \\
& = & \vert \nabla _\sigma P_t v_f (x) \vert ,
\end{eqnarray*}
since $v_f$ is increasing. Hence we get in terms of variance,
\begin{eqnarray*}
\Var_\mu(f) & = & 2 \, \int_0^{+\infty} \int_\R  \vert \nabla _\sigma P_t f \vert ^2 \, d\mu \, dt \\
            & \leq & 2 \, \int_0^{+\infty} \int_\R  \vert \nabla _\sigma P_t v_f \vert ^2 \, d\mu \, dt \\
            & = & \Var_\mu (v_f).
\end{eqnarray*}
Since the analysis above is also available for the function $- v_f$ which is decreasing, one deduces that the definition \eqref{eq:lambda1} of the spectral gap is not altered if the infimum is taken over monotone functions $f\in \D (\cE _\mu)$. \vspace{0.1cm}

The proof of Theorem~\ref{theo:spectral} being based on Theorem~\ref{theo:intert_a}, whose analogue in the Neumann case is given by Theorem~\ref{theo:neumann}, the inequality \eqref{eq:chen_wang} of Theorem~\ref{theo:spectral} is also available for Neumann diffusions. Let us provide an alternative proof by means of the Sturm-Liouville comparison principle. Let $g$ be the first non-trivial (i.e. non-constant) eigenvector of the Neumann operator, i.e.
\[
- \L g =  \lambda_1 \, g, \quad g'(\alpha) = g'(\beta) = 0.
\]
Taking derivative and setting $G=g'$ give
\[
\sigma^2 \, G'' + (b+2\sigma \sigma') \, G' +(b'+\lambda_1)\, G = 0, \quad G(\alpha) = G(\beta) = 0.
\]
Let $u$ be a smooth function on $[\alpha,\beta]$ such that $u'>0$ and
let $U=u'$. If we choose $a= 1/U$ then we have
\[
V_a = - \frac{\left(\L u \right)'}{u'}.
\]
The potential $V_a$ is bounded from below by some constant $c_a >0$ if and only if
\[
\sigma^2 \, U'' + (b+2\sigma \sigma') \, U' +(b'+c_a) \, U \leq 0 .
\]
In other words there exists some smooth function $\nu : [\alpha,\beta] \to [0,+\infty)$ such that
\[
\sigma^2 \, U'' + (b+2\sigma \sigma') \, U' +(b'+c_a +\nu) \, U= 0.
\]
Assume that $V_a \geq c_a $ for some constant $c_a>0$ and that $\lambda_1 < c_a$. Since $b' +c_a +\nu > b' +\lambda_1$ the famous Sturm-Liouville comparison principle tells us that between (strictly) two zeros of $G$ there is a zero of $U$. Therefore we obtain a contradiction because we have $G(\alpha) = G(\beta) =0$ on the one hand and $U>0$ on $(\alpha,\beta)$ on the other hand. Hence we get $\lambda_1 \geq c_a$ and optimizing on the set $a\in \C _+ ^\infty (\R)$ gives the inequality \eqref{eq:chen_wang}. \vspace{0.1cm}

Actually, a refinement of Theorem~\ref{theo:spectral} might be obtained by using more carefully the properties of the Feynman-Kac semigroup. The following result is a kind of Brascamp-Lieb inequality, cf \cite{brascamp-lieb}. Given the (positive) Feynman-Kac potential $V_a$, denote
\begin{equation}
\label{eq:Lumer}
\Lambda _a (V_a) := \inf \left \{ - \int_\R g \, \L_a ^{V_a} g \, d\mu_a : g\in \D (\cE _{\mu_a} ^{V_a}); \, \Vert g \Vert _{L^2 (\mu_a)} = 1 \right \} ,
\end{equation}
where $\D (\cE _{\mu_a} ^{V_a}) := \D (\cE _{\mu_a}) \cap L^2 (V_a \, d\mu_a )$.
\begin{theo}
\label{theo:brascamp-lieb}
Assume that there exists some function $a \in \C _+ ^\infty (\R)$ such that $\rho_a >0$. Then the following Brascamp-Lieb type inequality holds: for every $f\in \D(\cE_\mu)$,
\[
\Var_\mu(f) \leq \int_\R \frac{\vert \nabla_\sigma f \vert ^2}{V_a} \, d\mu .
\]
\end{theo}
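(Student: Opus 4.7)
The plan is to refine the proof of Theorem~\ref{theo:spectral} by replacing the crude bound $\|P_{a,t}^{V_a} g\|^2 \leq e^{-2\rho_a t}\|g\|^2$ (obtained from Cauchy-Schwarz plus contractivity) by a sharper resolvent-type identity that keeps track of the full potential $V_a$ pointwise. First, I would reproduce the opening computation from Theorem~\ref{theo:spectral}, namely
\[
\Var_\mu (f) = -2\int_0^{+\infty}\!\!\int_\R P_t f \,\L P_t f\,d\mu\,dt = 2\int_0^{+\infty}\!\!\int_\R (\nabla_\sigma P_t f)^2\, d\mu\, dt,
\]
and then use the elementary identity $\int g^2\,d\mu_a=\int g^2(\sigma/a)^2 d\mu$, which together with $(\nabla_a g)^2(\sigma/a)^2=(\nabla_\sigma g)^2$ yields $\int(\nabla_\sigma P_t f)^2 d\mu=\int (\nabla_a P_t f)^2 d\mu_a$. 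Applying the intertwining \eqref{eq:intert_a} then gives the representation
\[
\Var_\mu(f) = 2\int_0^{+\infty}\bigl\| P_{a,t}^{V_a}\nabla_a f\bigr\|_{L^2(\mu_a)}^2\,dt.
\]

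Next, I would use the self-adjointness of $\L_a^{V_a}=\L_a-V_a$ on $L^2(\mu_a)$ (recall this is exactly the content of the unicity result behind Lemma~\ref{lemme:schro}): the Feynman-Kac semigroup $P_{a,t}^{V_a}$ is symmetric, so with $u:=\nabla_a f\in L^2(\mu_a)$,
\[
\bigl\| P_{a,t}^{V_a} u\bigr\|_{L^2(\mu_a)}^2 = \langle u, P_{a,2t}^{V_a}u\rangle_{L^2(\mu_a)}.
\]
Changing variable $s=2t$ and integrating, together with the standard resolvent formula $\int_0^{+\infty} P_{a,s}^{V_a}\,ds = (-\L_a^{V_a})^{-1}=(-\L_a+V_a)^{-1}$ (valid because $V_a\geq\rho_a>0$ makes $-\L_a+V_a$ bounded below by $\rho_a$), I get
\[
\Var_\mu(f) = \bigl\langle u,(-\L_a+V_a)^{-1}u\bigr\rangle_{L^2(\mu_a)}.
\]

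Finally, I would compare $(-\L_a+V_a)^{-1}$ to the multiplication operator $V_a^{-1}$ by a one-line Cauchy-Schwarz argument. Setting $w:=(-\L_a+V_a)^{-1}u$, so that $-\L_a w + V_a w = u$ and $w\in\D(\cE_{\mu_a}^{V_a})$, integration by parts gives
\[
\langle u,w\rangle_{L^2(\mu_a)} = \cE_{\mu_a}(w,w) + \int_\R V_a\, w^2\,d\mu_a \geq \int_\R V_a\, w^2\,d\mu_a.
\]
Then Cauchy-Schwarz applied to $V_a^{1/2}w\cdot V_a^{-1/2}u$ yields
\[
\langle u,w\rangle^2 \leq \int_\R V_a\,w^2\,d\mu_a \cdot \int_\R \frac{u^2}{V_a}\,d\mu_a \leq \langle u,w\rangle \int_\R\frac{u^2}{V_a}\,d\mu_a,
\]
and dividing (trivial if $\langle u,w\rangle=0$) gives the desired inequality, after switching back to $\int (\nabla_\sigma f)^2/V_a\,d\mu$. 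The most delicate step is the third one, where one must justify the operator manipulations — existence and boundedness of $(-\L_a+V_a)^{-1}$ on $L^2(\mu_a)$, the resolvent formula, and the fact that $w$ lies in the form domain — but all of these follow cleanly from the essential self-adjointness of $\L_a^{V_a}$ (invoked in Lemma~\ref{lemme:schro}) together with the uniform lower bound $V_a\geq\rho_a>0$.
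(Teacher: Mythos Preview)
Your proof is correct and follows essentially the same approach as the paper: both express $\Var_\mu(f)$ as $\langle \nabla_a f,(-\L_a+V_a)^{-1}\nabla_a f\rangle_{L^2(\mu_a)}$ via the intertwining relation and then use the operator inequality $(-\L_a+V_a)^{-1}\leq V_a^{-1}$. The only cosmetic differences are that the paper reaches the resolvent identity by pairing $\nabla_a f$ directly with $P_{a,t}^{V_a}\nabla_a f$ (bypassing your symmetry-plus-change-of-variable detour through $\|P_{a,t}^{V_a}u\|^2$) and simply cites the operator inequality as standard, whereas you supply the explicit Cauchy--Schwarz justification.
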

\begin{proof}
First note that the Schr\"odinger operator $\L _a ^{V_a}$ is invertible on the space $L^2 (\mu_a)$. Indeed for every $f\in L ^2 (\mu_a)$ we have by the Lumer-Phillips theorem,
\[
 \Vert P_{a,t} ^{V_a} f \Vert _{L^2 (\mu_a)} \leq e^{-t \Lambda _a (V_a)} \, \Vert f \Vert _{L^2 (\mu_a)} .
\]
Since $-\L_a$ is a non-negative operator we have $\Lambda _a (V_a)  \geq \rho_a$ and thus we obtain for every $f\in L ^2 (\mu_a)$,
\[
\left \Vert ( - \L _a ^{V_a} )^{-1} f \right \Vert _{L^2 (\mu_a)} \leq \frac{1}{\rho_a} \, \Vert f \Vert _{L^2 (\mu_a)}.
\]
Now we have for every $f\in \D(\cE_\mu)$,
\begin{eqnarray*}
\Var_\mu(f) & = & \int_\R f \, \left( f - \mu (f) \right) \, d\mu \\
            & = & - \int_\R  \int_0^{+\infty}  f\, \L P_t f \, dt \, d\mu \\
            & = & \int_0^{+\infty} \int_\R \nabla_\sigma f \, \nabla _\sigma P_t f \, d\mu \, dt \\
            & = & \int_0^{+\infty} \int_\R \nabla_a f \, \nabla _a P_t f \, d\mu_a \, dt \\
            & = & \int_0^{+\infty} \int_\R \nabla_a f \, P_{a,t} ^{V_a} \nabla_a f \, d\mu_a \, dt \\
            & = & \int_\R \nabla_a f \, ( - \L_a ^{V_a} ) ^{-1} \nabla_a f \, d\mu_a \\
            & \leq & \int_\R \nabla_a f \, ( V_a ) ^{-1} \, \nabla_a f \, d\mu_a \\
            & = & \int_\R \frac{\vert \nabla_\sigma f \vert ^2}{V_a} \, d\mu .
\end{eqnarray*}
To obtain the lines 5 and 7 we used respectively Theorem~\ref{theo:intert_a} and the standard inequality $(- \L _a + V_a)^{-1} \leq (V_a)^{-1}$ understood in the sense of non-negative operators, the operator $(V_a)^{-1}$ being the mutiplication by the function $1/V_a$. The proof is now complete.
\end{proof}

A consequence of the previous result is the following: every Lipschitz function with respect to the metric $d_\sigma$ has its variance controlled by the $L^1$-norm under $\mu$ of the function $1/V_a $. The variance of Lipschitz functions reveals to be an important quantity arising in various problems. For instance it has been studied in \cite{alon} through the so-called spread constant, in relation with concentration properties and isoperimetry, and has been revisited in \cite{sammer} through a mass transportation approach. Recently and under the Bakry-\'Emery criterion $\Gamma _2 \geq 0$, Milman showed in \cite{milman} that it is enough to bound the $L^1$-norm of centered Lipschitz functions to get a Poincar\'e inequality (with a universal loss in the constants). \vspace{0.1cm}

The next theorem is an integrated version of the inequality $\lambda_1 \geq \rho_\sigma$ which derives from Theorem~\ref{theo:spectral} or from the Bakry-\'Emery criterion \eqref{eq:BE}. Besides the clear improvement given by this integrated criterion, it reveals to be relevant when $V_\sigma$ is positive but tends to 0 at infinity, as we will see later with some examples. For a (compact) Riemannian manifold version of Theorem~\ref{theo:veysseire} below, we mention the recent work of Veysseire \cite{veysseire} in which the potential $V_\sigma$ is nothing but the Ricci curvature lower bound. Similarly to \eqref{eq:Lumer} define for the (positive) Feynman-Kac potential $V_\sigma$,
\[
\Lambda (V_\sigma) := \inf \left \{ - \int_\R g \, \L ^{V_\sigma} g \, d\mu : g\in \D (\cE _{\mu} ^{V_\sigma}); \, \Vert g \Vert _{L^2 (\mu)} = 1 \right \} ,
\]
where $\D (\cE _{\mu} ^{V_\sigma}) := \D (\cE _{\mu}) \cap L^2 (V_\sigma \, d\mu )$. With the notation of \eqref{eq:Lumer} it corresponds to the quantity $\Lambda_\sigma (V_\sigma)$.
\begin{theo}
\label{theo:veysseire}
Assume that the Feynman-Kac potential $V_\sigma$ is positive. Then we have the estimate
\begin{equation}
\label{eq:lambda1_integre}
\lambda _1 \, \geq \, \frac{1}{\int_\R \frac{1}{V_\sigma} \, d\mu} .
\end{equation}
\end{theo}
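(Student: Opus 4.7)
The strategy is to identify $\lambda_1$ as the lowest eigenvalue of the Schr\"odinger operator $-\L^{V_\sigma}$ via the intertwining relation, and then reduce the estimate \eqref{eq:lambda1_integre} to a tractable integral inequality. Under the assumption $V_\sigma > 0$, the spectral gap $\lambda_1$ is positive and attained by an eigenfunction $g$ of $-\L$. As noted in the discussion following Theorem~\ref{theo:spectral}, one may assume $g' > 0$, so that $h := \nabla_\sigma g = \sigma g'$ is a strictly positive function. Applying the generator-level intertwining \eqref{eq:commut_a} with $a = \sigma$ to $\L g = -\lambda_1 g$ yields $\L h = (V_\sigma - \lambda_1)\,h$. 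Since $h>0$, it is the ground state of $-\L^{V_\sigma}$, and the Perron--Frobenius principle gives $\Lambda(V_\sigma) = \lambda_1$.

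Rewriting this eigenvector equation pointwise as $V_\sigma - \lambda_1 = \L h / h$ on $\R$, dividing by $V_\sigma$, and integrating against the probability measure $\mu$, one obtains
\[
\lambda_1 \int_\R \frac{d\mu}{V_\sigma} \;=\; 1 \;-\; \int_\R \frac{\L h}{V_\sigma\, h}\, d\mu.
\]
The theorem is therefore equivalent to the key inequality
\[
\int_\R \frac{\L h}{V_\sigma\, h}\, d\mu \;\leq\; 0.
\]

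Establishing this inequality is the main technical obstacle. The natural approach is to integrate by parts using the self-adjointness of $\L$ in $L^2(\mu)$:
\[
\int \frac{\L h}{V_\sigma\, h}\, d\mu \;=\; -\int \Gamma\!\left(h,\tfrac{1}{V_\sigma\, h}\right) d\mu \;=\; \int \frac{\sigma^2}{V_\sigma}\, \frac{h'}{h}\left(\frac{h'}{h}+\frac{V_\sigma'}{V_\sigma}\right) d\mu,
\]
and then to exploit the eigenvector relation $\L h = (V_\sigma - \lambda_1) h$ to control the indefinite cross term involving $V_\sigma'$. A completion-of-the-square argument combined with a Cauchy--Schwarz estimate is likely the right tool, possibly invoking the Brascamp--Lieb-type bound of Theorem~\ref{theo:brascamp-lieb} applied with a trial function depending on $V_\sigma$ such as $V_\sigma^{-1/2}$, in order to convert the cross-term estimate into a manageable spectral bound. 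The sharpness of \eqref{eq:lambda1_integre} in the Ornstein--Uhlenbeck case (where $V_\sigma$ is constant, yielding equality) indicates that the sharp constant must come from the full eigenvector structure of $h$ rather than from a generic Bakry--\'Emery-type bound.
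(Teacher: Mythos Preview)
Your proposal has two genuine gaps.

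First, you assume without justification that $\lambda_1$ is an eigenvalue of $-\L$ with a smooth eigenfunction $g$. On the whole line this is not automatic: the paper itself is careful to state equality in Theorem~\ref{theo:spectral} only \emph{if} $\lambda_1$ is an eigenvalue. Even granting the existence of $g$, you would still need $h=\sigma g'\in L^2(\mu)$ to invoke Perron--Frobenius and conclude $\Lambda(V_\sigma)=\lambda_1$; this is not verified. The paper circumvents all of this by localizing to a Neumann problem on $[-R,R]$, where the spectrum is discrete and $V_\sigma$ is bounded below by a positive constant, and then passing to the limit $R\to\infty$.

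Second, and more seriously, your argument is not a proof but a reformulation. The ``key inequality'' $\int \L h/(V_\sigma h)\,d\mu\le 0$ is tautologically equivalent to the statement you are trying to prove, since $\L h/h=V_\sigma-\lambda_1$ gives $\int \L h/(V_\sigma h)\,d\mu = 1-\lambda_1\int V_\sigma^{-1}\,d\mu$. The integration-by-parts expression you write down afterwards is correct but you never bound it; you only list possible tools. Nothing is actually established.

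The paper's route is quite different and does not rely on manipulating the eigenfunction pointwise. After localizing, it first shows $\lambda_1\ge\Lambda(V_\sigma)$ by combining the Brascamp--Lieb bound of Theorem~\ref{theo:brascamp-lieb}, the Lumer--Phillips decay $\Vert P_t^{V_\sigma}\Vert\le e^{-t\Lambda(V_\sigma)}$, and a convexity argument for $t\mapsto\log\int(P_t f)^2\,d\mu$. It then lower-bounds $\Lambda(V_\sigma)$ via the Poincar\'e inequality and Cauchy--Schwarz in the form $\mu(f)^2\le\int V_\sigma f^2\,d\mu\cdot\int V_\sigma^{-1}\,d\mu$, and closes by contradiction. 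That argument is short and robust; yours, as it stands, is incomplete.
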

\begin{proof}
If the function $1/V_\sigma$ is not integrable with respect to $\mu$ then there is nothing to prove, hence let us assume that $1/V_\sigma \in L^1 (\mu)$. We will use a localization procedure. Let $R>0$ be a truncation level and consider the Neumann diffusion in the compact interval $[-R,R]$. Recall that the reversible measure $\mu _R$ is the original one restricted to the interval $[-R,R]$. If $\lambda_1 ^R$ denotes the spectral gap associated to the Neumann dynamics then we have $\lambda_1 ^R \downarrow \lambda_1$ as $R\to +\infty$. Hence if we establish the inequality
\[
\lambda_1 ^R \, \geq \, \frac{1}{\int_\R \frac{1}{V_\sigma } \, d\mu _R},
\]
then passing through the limit we obtain the desired estimate \eqref{eq:lambda1_integre}. Note that the potential $V_\sigma$ remains the same as for our original diffusion on $\R$. Therefore, without loss of generality we can assume that our diffusion is a compactly supported Neumann diffusion. In the rest of the proof we remove the superscript $R$ to avoid a saturated notation. The important point in this localization resides in the following fact: the potential $V_\sigma$ is bounded from below on $[-R,R]$ by some positive constant, say $\rho_\sigma$, hence the Neumann diffusion admits a spectral gap. \vspace{0.1cm}

Let us show on the one hand that $\lambda_1 \geq \Lambda (V_\sigma)$. Letting $f\in \D (\cE _\mu)$ be non-null and centered and $t\geq 0$, we have by Theorems~\ref{theo:brascamp-lieb} and \ref{theo:neumann} in the Neumann case,
\begin{eqnarray*}
\Var _\mu (P_t f) & \leq & \frac{1}{\rho_\sigma} \, \int_{-R} ^R ( \nabla_\sigma P_t f ) ^2 \, d\mu \\
& \leq & \frac{1}{\rho_\sigma} \, \Vert P_{t} ^{V_\sigma} \vert \nabla _\sigma f \vert \Vert ^2 _{L^2 (\mu)} \\
& \leq & \frac{1}{\rho_\sigma} \, e^{-2 t \Lambda (V_\sigma)} \, \cE_\mu (f,f),
\end{eqnarray*}
where we used the Lumer-Phillips theorem to obtain the third inequality. Now consider the function $\psi : [0,+\infty) \to \R$ defined by
\[
\psi (t) := \log \int_{-R} ^R (P_t f)^2 \, d\mu ,
\]
so that the latter inequality rewrites as
\begin{equation}
\label{eq:psi_convex}
\psi (t) \leq C(f) - 2 t \Lambda (V_\sigma),
\end{equation}
where $C(f)$ is some positive constant depending on $f$. Differentiating two times the function $\psi$ with respect to the time parameter yields, after an integration by parts,
\[
\partial ^2 _{t} \psi (t) = \frac{4}{\left( \int_{-R} ^R (P_t f) ^2 \, d\mu \right) ^2} \, \left( \int_{-R} ^R (\L P_t f)^2 \, d\mu \, \int_{-R} ^R (P_t f)^2 \, d\mu - \left( \int_{-R} ^R \L P_t f \, P_t f \, d\mu \right) ^2 \right) ,
\]
a quantity which is non-negative by Cauchy-Schwarz' inequality. Hence the function $\psi$ is convex and thus from \eqref{eq:psi_convex} we obtain $\psi(t) \leq \psi (0)- 2 t \Lambda (V_\sigma)$, or in other words,
\[
\Var _{\mu}(P_t f)  \leq e^{-2  t \Lambda (V_\sigma) } \, \Var _\mu (f).
\]
By density of $\D (\cE _\mu)$ in $L^2 (\mu)$ the above estimate is available for every function $f\in L^2(\mu)$ and thus we get $\lambda_1 \geq \Lambda (V_\sigma)$ since the spectral gap $\lambda_1$ is the best constant such that the $L^2$ convergence above holds. \\
On the other hand we have by the Poincar\'e inequality,
\begin{eqnarray*}
\Lambda (V_\sigma) & \geq & \inf \left \{ \lambda_1 \, \left( 1 - \mu (f) ^2 \right) + \int_{-R} ^R V_\sigma \, f^2 \, d\mu : f \in \D (\cE _{\mu} ^{V_\sigma}); \, \Vert f \Vert _{L^2 (\mu)} = 1\right \} \\
& \geq & \lambda_1 + \inf \left \{ \int_{-R} ^R V_\sigma \, f^2 \, d\mu \, \left( 1 - \lambda_1 \, \int_{-R} ^R \frac{1}{V_\sigma} \, d\mu \right) : f \in \D (\cE _{\mu} ^{V_\sigma}); \, \Vert f \Vert _{L^2 (\mu)} = 1 \right \} ,
\end{eqnarray*}
where we used Cauchy-Schwarz' inequality. Combining with the preceding inequality $\lambda_1 \geq \Lambda (V_\sigma)$ entails that the last infimum above is non-positive. Now if the desired conclusion is false, i.e.
\[
1 - \lambda_1 \, \int_\R \frac{1}{V_\sigma} \, d\mu > 0,
\]
then this infimum is at least $\rho_\sigma$ which is positive on $[-R,R]$, leading thus to a contradiction. Therefore the inequality \eqref{eq:lambda1_integre} holds in the Neumann case. The proof is now achieved.
\end{proof}
In the spirit of Theorem~\ref{theo:brascamp-lieb} it is reasonable to wonder if Theorem~\ref{theo:veysseire} still holds with the function $\sigma$ replaced by some good function $a\in \C ^\infty _+ (\R)$. However the answer is negative when adapting the previous method since a process and its $h-$transform have the same spectral properties. If $h$ denotes the function $\sigma /a$ then a bit of analysis shows that we have the following equivalence:
\[
gh \in \D (\cE _{\mu} ^{V_\sigma}) \quad \mbox{and} \quad \Vert gh \Vert _{L^2 (\mu)} = 1 \quad \Longleftrightarrow \quad g \in \D (\cE _{\mu_a} ^{V_a}) \quad \mbox{and} \quad \Vert g \Vert _{L^2 (\mu_a)} = 1 .
\]
Therefore we obtain thanks to the $h$-transform identity \eqref{eq:h-transf}:
\begin{eqnarray*}
\Lambda (V_\sigma) & = & \inf \left \{ - \int_\R g h \, \L ^{V_\sigma} (g h) \, d\mu : gh \in \D (\cE _{\mu} ^{V_\sigma}); \, \Vert gh \Vert _{L^2 (\mu)} = 1 \right \} \\
& = & \inf \left \{ - \int_\R g \, \L ^{V_\sigma ^{(h)}} g \, d\mu _a  : g \in \D (\cE _{\mu_a} ^{V_a}); \, \Vert g \Vert _{L^2 (\mu_a)} = 1 \right \} \\
& = & \inf \left \{ - \int_\R g \, \L _a ^{V_a } g \, d\mu _a  : g \in \D (\cE _{\mu_a} ^{V_a}); \, \Vert g \Vert _{L^2 (\mu_a)} = 1 \right \} \\
& = & \Lambda _a (V_a) .
\end{eqnarray*}

Now we turn to the case of more general functions $\vphi \in \C _\I$. We establish below a $\vphi$-entropy inequality restricted to a class of functions, in the spirit of the modified log-Sobolev inequality emphasized by Bobkov and Ledoux \cite{bob_ledoux}. Recall that if for some $a\in \C ^\infty _+ (\R)$ we have $a \asymp \sigma$ then the processes $X$ and $X^a$ are of the same nature. If moreover $\rho_a >0$ then both are positive recurrent.
\begin{theo}
\label{theo:phi_entropy}
Assume that there exists some function $a\in \C ^\infty _+ (\R)$ such that $\rho_a >0$ and also $a \asymp \sigma$. Then the $\varphi$-entropy inequality \eqref{eq:phi_entropy} holds with constant $2\rho_a$, for every function $f\in H^1 _\vphi$ satisfying the assumption \eqref{eq:assump}.
\end{theo}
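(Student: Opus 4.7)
The plan is to combine the bivariate sub-intertwining of Theorem~\ref{theo:bicommutation} with the standard semigroup representation of the $\vphi$-entropy. First, note that the hypothesis $a \asymp \sigma$ together with $\rho_a > 0$ ensures that $X^a$ is positive recurrent, and hence so is $X$; in particular $\mu$ is a probability measure, $P_t f \to \mu(f)$ as $t\to +\infty$ in $L^2(\mu)$, and $P_{a,t}$ is a Markov contraction on $L^1(\mu_a)$.

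Assume first that $f \in \C_0 ^\infty (\R,\I)$ satisfying \eqref{eq:assump}, so that Theorem~\ref{theo:bicommutation} applies. Writing $\Ent_\mu ^\vphi (f) = -\int_0 ^{+\infty} \partial_t \mu(\vphi (P_t f)) \, dt$ and using the chain rule together with integration by parts (all quantities being well-defined since $P_t f$ takes values in a compact subinterval $[m,M]\subset \I$ where $\vphi''$ is bounded),
\[
\Ent_\mu ^\vphi (f) \, = \, - \int_0 ^{+\infty} \int_\R \vphi '(P_t f) \, \L P_t f \, d\mu \, dt \, = \, \int_0 ^{+\infty} \int_\R \vphi ''(P_t f) \, (\nabla_\sigma P_t f)^2 \, d\mu \, dt .
\]
Using the identity $(\nabla_\sigma g)^2 \, d\mu = (\nabla_a g) ^2 \, d\mu_a$ valid for every smooth $g$, the inner integral equals $\int_\R \Theta (P_t f, \nabla_a P_t f) \, d\mu_a$. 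Applying Theorem~\ref{theo:bicommutation}, together with $V_a \geq \rho_a >0$ which yields $P_{a,t} ^{2V_a} G \leq e^{-2\rho_a t} \, P_{a,t} G$ for every non-negative $G$, and the $L^1 (\mu_a)$-contractivity of $(P_{a,t})_{t\geq 0}$, we obtain
\[
\int_\R \Theta (P_t f, \nabla_a P_t f) \, d\mu_a \, \leq \, e^{-2\rho_a t} \, \int_\R \Theta (f, \nabla_a f) \, d\mu_a .
\]
Converting back via $\int \Theta (f, \nabla_a f) \, d\mu_a = \int \Theta (f, \nabla_\sigma f) \, d\mu = \cE _\mu (f, \vphi '(f))$ and integrating in time gives
\[
\Ent_\mu ^\vphi (f) \, \leq \, \frac{1}{2 \rho_a} \, \cE _\mu (f, \vphi '(f)),
\]
which is the desired inequality with constant $2\rho_a$.

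It remains to extend the estimate from $\C_0 ^\infty (\R,\I)$ to any $f \in H^1 _\vphi$ satisfying \eqref{eq:assump}. This is achieved by a truncation-and-mollification argument: one approximates $f$ by a sequence $(f_n) \subset \C_0 ^\infty (\R,\I)$ such that $f_n \to f$ and $\vphi '(f_n) \to \vphi '(f)$ in $\D (\cE _\mu)$, while preserving the monotonicity of $f$ needed in assumption \eqref{eq:assump} (since $P_t f_n$ and $f_n$ inherit the appropriate sign of the derivative). Passing to the limit by Fatou's lemma on the left-hand side and continuity of $\cE_\mu$ on the right-hand side finishes the proof. The main technical point is verifying that assumption \eqref{eq:assump} is propagated to the approximations; this is handled by choosing monotone approximations $f_n$ with the same monotonicity as $f$ (recall that by Theorem~\ref{theo:intert_a}, $(P_t f_n)'$ inherits the sign of $f_n '$), so that the sign condition on $(\sigma/a)' \, \vphi '''(P_t f_n) \, (P_t f_n)'$ is preserved uniformly in $n$.
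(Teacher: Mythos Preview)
Your proof is correct and follows essentially the same route as the paper's: reduce to $f\in \C_0^\infty(\R,\I)$ by density, express $\Ent_\mu^\vphi(f)$ via the semigroup representation, integrate by parts to obtain $\int_0^{+\infty}\int_\R \Theta(P_t f,\nabla_a P_t f)\,d\mu_a\,dt$, apply Theorem~\ref{theo:bicommutation} together with $V_a\geq\rho_a$ and the $L^1(\mu_a)$-contractivity of $(P_{a,t})_{t\geq 0}$, and integrate the exponential in time. Your discussion of the density step is slightly more detailed than the paper's (which simply invokes ``a density argument''), but the substance is identical.
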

\begin{proof}
A density argument allows us to prove the result only for functions $f\in \C ^\infty _0 (\R, \I)$. We have by integration by parts,
\begin{eqnarray*}
\Ent_\mu^\vphi (f) & = & - \int_\R \int_0^\infty \partial _t \vphi (P_t f) \, dt \, d\mu \\
    & = & - \int_0^{+\infty} \int_\R \vphi '(P_t f) \, \L P_t f \, d\mu \, dt \\
    & = & \int_0^{+\infty} \int_\R \nabla _\sigma P_t f \, \nabla _\sigma \vphi '(P_t f) \, d\mu \, dt \\
    & = & \int_0^{+\infty} \int_\R \Theta \left( P_t f , \nabla _a P_t f \right) \, d\mu_a \, dt ,
\end{eqnarray*}
where we remind that the bivariate function $\Theta $ defined at the end of Section~\ref{sect:main} is given by
\[
\Theta (x,y) := \varphi ''(x) \, y^2 , \quad (x,y) \in \I \times \R .
\]
Using now Theorem~\ref{theo:bicommutation} and the contraction property in $L^1 (\mu_a)$ of the semigroup $(P_{a,t})_{t\geq 0}$, we obtain
\begin{eqnarray*}
\Ent_\mu^\vphi (f) & \leq & \int_0^{+\infty} e^{-2\rho_a t} \int_\R P_{a,t} \Theta \left( f , \nabla _a f \right) \, d\mu_a \, dt \\
    & \leq & \int_0^{+\infty} e^{-2\rho_a t} \, dt \, \int_\R \Theta \left( f , \nabla_a f \right) \, d\mu_a \\
    & = & \frac{1}{2\rho_a} \, \int_\R \nabla _\sigma f \, \nabla _\sigma \vphi '(f) \, d\mu \\
    & = & \frac{1}{2\rho_a} \, \cE_\mu \left( f, \vphi '(f) \right) ,
\end{eqnarray*}
which completes the proof.
\end{proof}
Let $\IC _+ ^\infty (\R) $ and $\DC _+ ^\infty (\R) $ be the subsets of $\C _+ ^\infty (\R) $ given by considering increasing and decreasing functions, respectively. In the case of the log-Sobolev or Beckner inequalities, i.e. $\vphi (x) := x \log x$ or $\vphi (x) = x^p$ respectively, both with $\I = (0,\infty)$, then $\vphi ''' <0$ and thus \eqref{eq:assump} is reduced to
\[
\left( \frac{\sigma }{a} \right)' \, f' \leq 0,
\]
since by Theorem~\ref{theo:intert_a}, the functions $f$ and $P_t f$ are comonotonic. In particular we obtain the desired functional inequality for functions in $\IC _+ ^\infty (\R) $ (resp. in $\DC _+ ^\infty (\R)$) if $\sigma /a \in \DC _+ ^\infty (\R)$ (resp. $\sigma /a \in \IC _+ ^\infty (\R)$). Note that for the log-Sobolev inequality, Miclo \cite{miclo} proved that one can restrict to monotone functions, that is, if the log-Sobolev inequality is satisfied for the class of monotone functions, then it holds actually for all functions and with the same constant. Thus combining with Theorem~\ref{theo:phi_entropy} we obtain the following result.
\begin{theo}
\label{theo:LS_miclo}
Assume that there exists two functions $a, \tilde{a} \in \C _+ ^\infty (\R)$ such that $\sigma /a \in \IC _+ ^\infty (\R)$, $\sigma /\tilde{a} \in \DC _+ ^\infty (\R)$, $\rho_a >0$, $\rho_{\tilde{a}} > 0$ and also $a \asymp \tilde{a} \asymp \sigma$. Then the log-Sobolev inequality holds. More precisely the following estimate on the log-Sobolev constant $\CLS$ holds:
\[
\CLS \, \geq \, 2 \, \min \left( \sup \left \{ \rho_a : \sigma /a \in \IC _+ ^\infty (\R), \, a \asymp \sigma \right \} , \sup \left \{ \rho_a : \sigma /a \in \DC _+ ^\infty (\R), \, a \asymp \sigma  \right \} \right).
\]
In particular if the probability measure $\mu$ is symmetric, i.e. its density is an even function, then the latter inequality reduces to
\[
\CLS \, \geq \, 2 \, \sup \left \{ \rho_a : \sigma /a \in \IC _+ ^\infty (\R) , \, a \asymp \sigma \right \} .
\]
\end{theo}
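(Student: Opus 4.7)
The plan is to apply Theorem~\ref{theo:phi_entropy} twice with $\vphi(x) := x\log x$, which belongs to $\C_\I$ with $\I = (0,\infty)$ and satisfies $\vphi''' < 0$ on $\I$. For this choice of $\vphi$, and using that $f$ and $P_t f$ are comonotonic by Theorem~\ref{theo:intert_a}, the assumption \eqref{eq:assump} reduces to the sign condition $(\sigma/a)'\, f' \leq 0$. Hence for $a$ with $\sigma/a \in \IC_+^\infty(\R)$, $\rho_a > 0$ and $a \asymp \sigma$, Theorem~\ref{theo:phi_entropy} furnishes the log-Sobolev inequality with constant $2\rho_a$ restricted to decreasing functions in $H^1_\vphi$. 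Symmetrically, for $\tilde{a}$ with $\sigma/\tilde{a} \in \DC_+^\infty(\R)$, $\rho_{\tilde{a}}>0$ and $\tilde{a} \asymp \sigma$, one obtains the log-Sobolev inequality with constant $2\rho_{\tilde{a}}$ restricted to increasing functions in $H^1_\vphi$.

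Combining the two assertions, the log-Sobolev inequality is valid for every monotone function in $H^1_\vphi$ with constant $2 \min(\rho_a, \rho_{\tilde{a}})$. Invoking Miclo's result from \cite{miclo} recalled just before the theorem, this restricted inequality then upgrades automatically to the full log-Sobolev inequality, with the same constant. Optimizing independently over the two parameters in their respective classes yields
\[
\CLS \, \geq \, 2 \sup_{a, \tilde{a}} \min(\rho_a, \rho_{\tilde{a}}) \, = \, 2 \min\left( \sup_a \rho_a , \sup_{\tilde{a}} \rho_{\tilde{a}} \right) ,
\]
where the suprema are taken over the admissible classes described in the statement. This gives the first announced inequality.

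For the symmetric case, the key observation is that the involution $x \mapsto -x$ preserves the generator $\L$ and the measure $\mu$: the evenness of $h = e^{-U}/\sigma^2$ forces $\sigma$ to be even and $U$ even (hence $b$ odd). Given any admissible $a$ with $\sigma/a \in \IC_+^\infty(\R)$, one sets $\tilde{a}(x) := a(-x)$; a direct inspection shows that $\tilde{a}$ is admissible with $\sigma/\tilde{a} \in \DC_+^\infty(\R)$ and $\tilde{a}\asymp\sigma$, and computing from the definition \eqref{eq:Va} gives $V_{\tilde{a}}(x) = V_a(-x)$, so that $\rho_{\tilde{a}} = \rho_a$. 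Consequently the two suprema in the general estimate coincide and the formula collapses to the claimed single supremum. The only subtle point is tracking the sign constraint imposed by \eqref{eq:assump} when specializing to $\vphi = x\log x$, but this is essentially a bookkeeping matter; the structural input is really Theorem~\ref{theo:phi_entropy} combined with Miclo's extension from monotone to arbitrary functions.
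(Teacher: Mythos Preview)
Your proposal is correct and follows exactly the approach the paper intends: specialize Theorem~\ref{theo:phi_entropy} to $\vphi(x)=x\log x$, observe that \eqref{eq:assump} reduces to $(\sigma/a)'f'\leq 0$ so that each choice of $a$ (resp.\ $\tilde a$) handles one monotonicity class, and then invoke Miclo's result \cite{miclo} to pass from monotone functions to all of $H^1_\vphi$. One small imprecision: the evenness of the density $e^{-U}/\sigma^2$ does not by itself force $\sigma$ even and $U$ even; what you actually need for the reflection $\tilde a(x)=a(-x)$ to give $V_{\tilde a}(x)=V_a(-x)$ is that the generator $\L$ itself commute with $x\mapsto -x$ (i.e.\ $\sigma$ even and $b$ odd), which is an additional symmetry assumption implicit in the paper's examples.
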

Unfortunately, such a result cannot be similarly stated for the Beckner inequality. Indeed, we ignore if the Beckner inequality restricted to the class of monotone functions is equivalent to the standard Beckner inequality.

\section{Examples}
This final part is devoted to illustrate the above functional inequalities by revisiting classical examples, for which new estimates on the optimal constants are derived. We first focus our attention on the case when the diffusion function $\sigma$ is constant, equal to 1. Then the Sturm-Liouville operator we consider is given by
$$
\L f := f'' - U' \, f' ,
$$
where $U$ is some smooth potential. Take $U(0)$ such that $e^{-U}$ is a density with respect to the Lebesgue measure. For the examples we have in mind (except the Gaussian case), the Bakry-\'Emery theory is not fully satisfactory since the Feynman-Kac potential $V_\sigma$, which rewrites since $\sigma$ is constant as
\[
V_\sigma := \frac{\L \sigma}{\sigma} + U'' = U'' ,
\]
is not bounded from below by some positive constant. In other words, the potential $U$ is not strictly convex and can even be concave in a localized region, as for the double-well example. \vspace{0.1cm}

Based on his work on Hardy's inequalities, let us start by recalling the famous result of Muckenhoupt \cite{mucken} which characterizes the dynamics satisfying the Poincar\'e inequality on $\R$. See also the paper of Miclo \cite{M} for an approach through the so-called path method. Denote the quantities
\[
B_m ^+ := \sup _{x\geq m} \int _x ^{+\infty} e^{-U(y)} \, dy \, \int_m ^x   e^{U(y)} \, dy \quad \mbox{ and } \quad B_m ^- := \sup _{x\leq m} \int _{-\infty} ^x  e^{-U(y)} \, dy \, \int_x ^m e^{U(y)} \, dy ,
\]
where $m$ is a median of the probability measure $\mu$ with density $e^{-U}$. Finally set $B_m := \max \{ B_m ^+ , B_m ^- \}$.
\begin{theo}[Muckenhoupt]
\label{theo:mucken}
The operator $- \L$ has a spectral gap $\lambda_1$ if and only if $B_m $ is finite. More precisely we have the inequalities
\[
\frac{1}{4 B_m} \leq \lambda_1 \leq \frac{2}{B_m }.
\]
\end{theo}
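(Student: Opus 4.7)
The plan is to split the proof into sufficiency (the lower bound $\lambda_1 \geq 1/(4B_m)$) and necessity (the upper bound $\lambda_1 \leq 2/B_m$), and to reduce both to weighted Hardy-type inequalities on the two half-lines determined by the median $m$.

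For the lower bound, I would start from the elementary estimate $\Var_\mu (f) \leq \int_\R (f - f(m))^2 \, d\mu$ and split the integral at the median. On $[m, +\infty)$, the function $g := f - f(m)$ vanishes at $m$, so writing $g(x) = \int_m^x g'(y) \, dy$ and applying Cauchy-Schwarz with a well-chosen auxiliary weight $\phi$ leads, after optimization in $\phi$, to the one-sided Hardy inequality $\int_m ^\infty g^2 \, e^{-U} \, dx \leq 4 B_m ^+ \int_m ^\infty (g')^2 \, e^{-U} \, dx$. The symmetric argument on $(-\infty, m]$ produces the analogous bound with $B_m^-$. Summing both contributions and using $\cE _\mu (f,f) = \int_\R (f')^2 \, e^{-U} \, dx$ gives $\Var_\mu (f) \leq 4 B_m \, \cE _\mu (f,f)$, which is the desired lower bound on $\lambda_1$.

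For the upper bound, I would produce explicit test functions concentrated at a bottleneck. For $x_0 > m$, set $K := \int_m ^{x_0} e^{U(y)} \, dy$ and
$$f_{x_0}(x) := \begin{cases} 0 & \text{if } x \leq m, \\ \int_m ^x e^{U(y)} \, dy & \text{if } m \leq x \leq x_0, \\ K & \text{if } x \geq x_0. \end{cases}$$
Then a direct computation gives $\cE _\mu (f_{x_0} , f_{x_0}) = K$. Denoting $\alpha := \mu([x_0, +\infty))$ and $\gamma := \mu((-\infty, m])$, the variance is bounded below by $\gamma \, \mu(f_{x_0})^2 + \alpha \, (K - \mu(f_{x_0}))^2$. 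Minimizing in the free parameter $\mu(f_{x_0}) =: \theta$ gives the value $K^2 \alpha \gamma /(\alpha + \gamma)$, and the median property $\gamma \geq 1/2 \geq \alpha$ forces $\gamma/(\alpha + \gamma) \geq 1/2$, so that $\Var_\mu (f_{x_0}) \geq K^2 \alpha / 2$. The variational characterization of $\lambda_1$ then gives $\lambda_1 \leq 2/(\alpha K)$, and taking the infimum over $x_0 \geq m$ yields $\lambda_1 \leq 2/B_m ^+$. The mirrored construction on $(-\infty, m]$ gives $\lambda_1 \leq 2/B_m ^-$, hence $\lambda_1 \leq 2/B_m$. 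Routine smoothing allows $f_{x_0}$ to be replaced by an element of $\D(\cE _\mu)$ without affecting the ratio in the limit.

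The main technical hurdle is the sharp constant in the one-sided weighted Hardy inequality: a naive Cauchy-Schwarz with the weight $\phi \equiv 1$ gives the far weaker bound $\int g^2 e^{-U} \leq (\int_m^\infty e^{-U}) (\int e^U) \cdot \int (g')^2 e^{-U}$ with the wrong dependence on $B_m^+$, and it is only by choosing $\phi$ in terms of $\int_x^\infty e^{-U}$ (essentially forcing equality in Cauchy-Schwarz up to a factor) that the correct $4 B_m^+$ is recovered; this is the heart of Muckenhoupt's original argument. The second subtlety is the factor $2$ rather than $4$ in the upper bound: it genuinely requires the bilateral optimization of $\theta$ above rather than the cruder $\Var_\mu (f_{x_0}) \geq \alpha (K - \mu(f_{x_0}))^2 \geq \alpha K^2/4$, which only yields $\lambda_1 \leq 4/B_m$.
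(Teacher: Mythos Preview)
The paper does not prove this theorem at all: it is stated as a classical result with a reference to Muckenhoupt's original paper and to Miclo's path-method approach, and is used only as a benchmark for the estimates obtained later in Section~5. So there is no ``paper's own proof'' to compare against.

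That said, your sketch is a faithful outline of the standard argument. The lower bound is exactly the reduction to two one-sided weighted Hardy inequalities (with weights $w=v=e^{-U}$, so that the Muckenhoupt constant is $B_m^\pm$ and the optimal Hardy constant lies in $[B_m^\pm,4B_m^\pm]$), combined with $\Var_\mu(f)\le\int(f-f(m))^2\,d\mu$. Your description of the mechanism---Cauchy--Schwarz against an auxiliary weight chosen in terms of the tail $\int_x^\infty e^{-U}$---is the classical route to the sharp factor $4$. For the upper bound, your test functions $f_{x_0}$ are the usual ones; the computation $\cE_\mu(f_{x_0},f_{x_0})=K$ is correct, and your variance lower bound is valid once one reads ``minimizing in $\theta$'' as using $\Var_\mu(f)=\min_c\int(f-c)^2\,d\mu\ge\min_c\bigl[\gamma c^2+\alpha(K-c)^2\bigr]$, which is indeed $K^2\alpha\gamma/(\alpha+\gamma)\ge K^2\alpha/2$ by the median property. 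The only cosmetic point is that $\mu(f_{x_0})$ is not literally a free parameter; the freedom comes from the variational characterization of the variance, which you are implicitly invoking.
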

Although the quantity $B_m$ might be difficult to estimate, the important point is the following: every non-trivial upper bound on $B_m$ provides a lower bound on the spectral gap. Following this observation let us introduce the operator
\begin{equation}
\label{eq:dynamics_alpha}
\L f (x) = f''(x) - \vert x \vert ^{\alpha -1} \, \Sign (x) \, f' (x),
\end{equation}
corresponding to the potential $U (x) := |x|^\alpha /\alpha$, where $\alpha >0$ and $\Sign$ stands for the sign function on $\R$. Although the function $U$ might not be $\C ^2$ at the origin, it does not play an important role in our study and thus can be ignored, at the price of an unessential regularizing procedure. For $\alpha = 1$ the reversible probability measure is the (symmetrized) exponential exponential measure on $\R$ whereas for $\alpha = 2$ the underlying process is the Ornstein-Uhlenbeck process and $\mu$ is the standard Gaussian distribution. It is well-known that the operator $-\L$ admits a spectral gap if and only if $\alpha \geq 1$ and the log-Sobolev inequality is satisfied if and only if $\alpha \geq 2$, cf. for instance \cite{latala}. \vspace{0.1cm}

Recall the notation of the potential $V_a$ defined in \eqref{eq:Va},
\[
V_a := \frac{\L _a (a)}{a} - b' ,
\]
and also $\rho_a := \inf V_a$ when it exists. Starting with the Poincar\'e inequality, our objective is to find some nice function $a\in \C _+ ^\infty (\R)$ such that $\rho_a >0$. The case $\alpha = 2$ in \eqref{eq:dynamics_alpha} is well-known and we have $\lambda_1 = 1$. To recover this result through Theorem~\ref{theo:spectral}, choose $a = 1$ which gives $V_a = 1$ hence $\lambda_1 \geq 1$. Moreover by the proof of Theorem~\ref{theo:spectral}, we see that linear functions are extremal and thus $\lambda_1 = 1$. Certainly, this result is expected since the Bakry-\'Emery theory fits perfectly and gives the optimal results, or in other words, the choice $a = \sigma$ is optimal. \vspace{0.1cm}

Let us consider the case $\alpha = 1$ in \eqref{eq:dynamics_alpha}. Using the nice properties of the exponential distribution, a famous result of Bobkov and Ledoux \cite{bob_ledoux} states that $\lambda_1  = 1/4$. To recapture this result we proceed as follows. Set $a (x) := e^{- \vert x\vert /2}$. Of course $a$ is not smooth at the origin but it causes no trouble for the present example. We have $V_a = \delta _0 + 1/4$ where $\delta _0$ stands for the Dirac mass at point 0, so that we obtain $\lambda_1 \geq \rho_a = 1/4$. To get the reverse inequality, apply the Poincar\'e inequality to the sequence of functions $x \mapsto e^{ \alpha \vert x\vert}$ where $\alpha <  1/2$ and take the limit as $\alpha \uparrow 1/2$ which yields $\lambda_1 \leq 1/4$, and thus the desired equality. \vspace{0.1cm}

Now we focus on the case $\alpha \in (1,2)$  in \eqref{eq:dynamics_alpha}. Applying Theorem~\ref{theo:veysseire} to these dynamics entails the following lower bound,
\begin{equation}
\label{eq:lambda1_alpha}
\lambda_1 \geq \frac{\alpha -1} {\int_\R \vert x \vert ^{2-\alpha} \, \mu (dx)}  = (\alpha -1) \, \alpha ^{1 - 2/\alpha} \, \frac{\Gamma (1/\alpha)}{\Gamma ((3-\alpha)/\alpha)} ,
\end{equation}
where $\Gamma$ is the well-known Gamma function $\Gamma(u):=\int_0^{+\infty} x^{u-1}\, e^{-x} \, dx$, $u>0$. Such a result might be compared with that obtained from the Muckenhoupt criterion. More precisely since $\mu $ is symmetric then it has median 0 and we have $B_0 ^+ = B_0 ^- $. Therefore we get
\begin{eqnarray*}
B_0 ^+ & = & \sup _{x\geq 0} \int _x ^{+\infty} e^{- y^\alpha /\alpha} \, dy \, \int_0 ^x   e^{y^\alpha /\alpha} \, dy \\
& \leq & \alpha ^{2/\alpha} \, \Gamma (1+ 1/\alpha) ^2 ,
\end{eqnarray*}
where we used the trivial inequality $s^\alpha - r^\alpha \geq (s-r)^\alpha $ with $0 \leq r \leq s$, which is available since $\alpha >1$. Finally Muckenhoupt's criterion entails the estimate
\begin{equation}
\label{eq:mucken_alpha}
\lambda_1 \geq \frac{1}{4 \, \alpha ^{2/\alpha} \, \Gamma (1+ 1/\alpha) ^2} .
\end{equation}
One notices that our estimate \eqref{eq:lambda1_alpha} is worse as soon as $\alpha \approx 1$ (Muckenhoupt's estimate is sharp for $\alpha = 1$) but is better than \eqref{eq:mucken_alpha} otherwise (numerically for $\alpha $ at least 1.188). In order to obtain a convenient upper bound on $\lambda_1 $, we have to apply the Poincar\'e inequality with a suitable function. For instance let $f$ be the $\mu$-centered function $f(x):= \Sign (x) |x|^\varepsilon $, where $\varepsilon > 1/2$. Then using some symmetries and a change of variables, we have
\begin{eqnarray}
\label{eq:lambda_upper}
\nonumber \lambda_1 & \leq & \frac{\cE _{\mu } (f,f)}{\Var _{\mu } (f)} \\
& = & \varepsilon ^2 \, \alpha ^{-2/\alpha} \, \frac{\Gamma ((2\varepsilon -1)/\alpha )}{ \Gamma ((2\varepsilon +1)/\alpha )} ,
\end{eqnarray}
Choosing now $\varepsilon = 1$ shows that we have the upper bound
\[
\lambda_1 \leq \frac{1}{3-\alpha} \, \alpha ^{1 - 2/\alpha} \, \frac{\Gamma (1/\alpha)}{\Gamma ((3-\alpha)/\alpha)} ,
\]
which is nothing but the lower bound \eqref{eq:lambda1_alpha} times $1/(3-\alpha)(\alpha -1)$. \vspace{0.1cm}

Another example of interest is given by the case $\alpha = 4$ in \eqref{eq:dynamics_alpha}. Here the lack of strict convexity of the potential $U$ is located at the origin. Letting $a : =e^W$ where $W$ is a smooth function to be chosen later, we have
\begin{eqnarray*}
V_a & = & \frac{\L _a (a)}{a} + U ''  \\
& = & \frac{a''}{a} + b_a \, \frac{a'}{a} + U'' \\
& = & W'' + (W')^2 + b_a W' + U'' \\
& = & W'' - (W')^2 - U ' \, W' + U'' \\
& = & W'' - \left( W' + \frac{U'}{2} \right) ^2 + \frac{(U')^2}{4} + U'' \\
& = & Z' - Z^2 + \frac{U''}{2} + \frac{(U')^2}{4},
\end{eqnarray*}
with $Z := W' + U'/2$. One of the simplest choice is to take $Z (x) := \varepsilon x$ with $\varepsilon >0$ to be chosen below, that is,
\[
W (x) \, = \, - \frac{U(x)}{2} + \frac{\varepsilon x^2}{2}, \quad x\in \R .
\]
Obviously, one could choose for $Z$ a polynomial of higher degree but the optimization below would become much more delicate. Plugging then into the above expression entails
\begin{eqnarray*}
\rho_a & = & \varepsilon + \inf_{x\in \R} \, \left \{ \frac{(U')^2}{4} + \frac{U''}{2} - \varepsilon  ^2 x^2 \right \} \\
& = & \varepsilon + \inf_{x\in \R} \, \left \{ \frac{x^6}{4} + \left( \frac{3}{2} - \varepsilon ^2 \right) \, x^2 \right \} .
\end{eqnarray*}
Taking $\varepsilon := \sqrt{3/2}$ we get $\rho_a = \sqrt{3/2}$ and therefore we obtain $\lambda_1 \geq \sqrt{3/2} \approx 1.224$ which is better than Muckenhoupt's estimate \eqref{eq:mucken_alpha} which only yields $\lambda_1 \geq 1/8\Gamma (5/4) ^2 \approx 0.152$. Together with the upper bound \eqref{eq:lambda_upper}, also available for $\alpha =4$ and numerically minimal for $\varepsilon \approx 0.854$ with the value $\approx 1.426$, we obtain for this example $\lambda_1 \in [1.224, 1.426]$. \vspace{0.1cm}

The last example we have in mind is the case of the double-well potential. For instance let $U$ be given by
\begin{equation}
\label{eq:double-well}
U(x) := x^4 /4 - \beta x^2 /2 , \quad x\in \R ,
\end{equation}
with $\beta >0$. Such a potential is convex at infinity but exhibits a concave region near the origin, which increases as $\beta$ does. These dynamics satisfy the functional inequalities of interest (Poincar\'e and log-Sobolev) thanks to the strict convexity at infinity and using perturbation arguments or Wang's criterion on exponential integrability, see for instance \cite{wang0}. Using the same method as before and with the same choice of function $a = e^W$ with
\[
W (x) \, := \, - \frac{U(x)}{2} + \frac{\varepsilon x^2}{2} = - \frac{x^4}{8} + \left( \frac{\beta}{4} + \frac{\varepsilon}{2} \right) \, x^2 ,
\]
yields to the following estimate:
\begin{eqnarray*}
\rho_a & = & \varepsilon + \inf_{x\in \R} \, \left \{ \frac{(U')^2}{4} + \frac{U ''}{2} - \varepsilon  ^2 x^2 \right \} \\
& = & \, \varepsilon -\frac{\beta}{2}
+ \inf_{x\in \R} \, \left \{  \frac{x^2}{4} \left(x^2-\beta \right)^2+ \left( \frac{3}{2} - \varepsilon ^2 \right) \, x^2 \right \} .
\end{eqnarray*}
For instance if $0 < \beta<\sqrt 6$ then taking $\varepsilon :=\sqrt{3/2}$ the minimum of $V_a$ is attained in $0$ and thus $\rho_a = \sqrt{3/2}-\beta >0$. Otherwise the case $\beta \geq \sqrt 6$ requires tedious computations to find a good parameter $\varepsilon$ such that $\rho_a >0$, meaning that the concave region between the two wells is large. \vspace{0.1cm}

Now we turn to the case of log-Sobolev inequalities. Once again our goal is to find a nice test function $a\in \C^\infty _+ (\R)$ such that $\rho_a >0$ but under the additional monotonicity constraint of Theorem~\ref{theo:LS_miclo}. Recall that we have taken the diffusion function $\sigma$ to be constant and equal to 1 in all the examples of interest, hence the statement of Theorem~\ref{theo:LS_miclo} reduces to
\[
\CLS \, \geq \, 2 \, \sup \left \{ \rho_a : a \in \IC ^\infty _+ (\R), \, a \asymp 1 \right \} .
\]
Let us investigate our previous examples. As mentioned above, the dynamics \eqref{eq:dynamics_alpha} satisfies the log-Sobolev inequality if and only if $\alpha \geq 2$. For the Ornstein-Uhlenbeck potential corresponding to the case $\alpha = 2$, it is well-known that $\CLS = 2$. By taking $a\equiv1$ as for the Poincar\'e inequality we get $\CLS \geq 2$. Since $ 2 = 2\, \lambda_1 = \CP \geq \CLS$, we obtain $\CLS = 2$. We can also recover that the exponential functions $f_\kappa (x):= e^{\kappa x}$ are extremal for the log-Sobolev inequality. Indeed, using the famous commutation relation
\[
(P_t g)' = e^{-t} \, P_t (g'),
\]
available for every smooth positive function $g$, it can be shown that the equality holds in the inequality \eqref{eq:bicommutation} of Theorem~\ref{theo:bicommutation}, that is,
\[
\Theta \left( P_t f_\kappa , (P_t f_\kappa)' \right) = e^{-2t} \, P_t \Theta (f_\kappa , f_\kappa ') ,
\]
where in the log-Sobolev case $\Theta (r,s) := s^2 /r$ with $r>0$ and $s\in \R$. Then it is enough to observe that the equality is conserved in all the steps of the proof of Theorem~\ref{theo:phi_entropy}. \vspace{0.1cm}

We now  turn to the two other examples, i.e. the case of the generator \eqref{eq:dynamics_alpha} for $\alpha >2$ and the double-well potential defined in \eqref{eq:double-well}. As before we set $a:=e^W$ and since we require $a$ to be increasing, let $W' :=e^A$ where $A$ is some convenient function to be chosen below. As for the spectral gap, we have for every $x\in \R$,
\begin{eqnarray*}
V_a (x) & = & W''(x) - (W')^2 (x) - U' (x) \, W'(x) + U''(x) \\
& = & \left( A' (x)  -e^{A(x)} - \Sign (x) \, \vert x\vert ^{\alpha -1} \right) \, e^{A(x)}  + (\alpha -1) \, \vert x\vert ^{\alpha -2}.
\end{eqnarray*}
In order to obtain $\rho _a >0$ with furthermore $a \asymp 1$, we are looking for some function $A$ increasing in a neighborhood of 0, going to $-\infty$ as $x\to + \infty$ so that $e^A$ is integrable on $\R$ with respect to the Lebesgue measure, and which does not tend to $+\infty$ as $x\to -\infty$. Maybe far from optimality, an example is given by $A(x) := - (\varepsilon x - \gamma)^2$, where for each example of interest the non-null numbers $\varepsilon$ and $\gamma$ are chosen conveniently of the same sign. For instance in the case $\alpha = 4$ the choices $\varepsilon = 1$ and $\gamma = 1$ entail, using numerical computations, the lower bound $\rho_a \geq 0.594$. Together with the upper bound $\CLS \leq 2 \lambda_1$ we obtain $\CLS \in [1.188, 2.852]$. \vspace{0.1cm}

For the double-well example, the potential $V_a$ reads as
\[
V_a(x) \, = \, (A'(x) - e^{A(x)} -x^3 +\beta x) \, e^{A(x)} + 3x^2 -\beta.
\]
Once again the choice of $A(x) = - (\varepsilon x - \gamma)^2$, where $\varepsilon, \gamma$ are found such that $V_a >0$ in a neighborhood of 0, allows us to obtain $\rho_a >0$. For instance if $\beta = 1/2$ then set $\varepsilon = 1.28$ and $\gamma = 1$ so that we get $\rho_a \geq 0.22$ using again numerical computations and thus $\CLS \geq 0.44$. \vspace{0.1cm}

Let us achieve this work by considering an example involving a non-constant diffusion function $\sigma$. Given some potential $U$, we assume once again that the probability measure $\mu$ has density proportional to $e^{-U}$. Such a measure is reversible with respect to the following dynamics
\[
\L f \, = \, \sigma^2 \, f'' + \left( 2\, \sigma \, \sigma ' - \sigma ^2 \, U' \right) \, f ' ,
\]
where $\sigma$ is an arbitrary smooth function. We focus on the generalized Cauchy measure $\mu$ with density proportional to $(1 + x^2)^{-\beta}$, where $\beta > 1/2$. This means that the potential $U$ is given by
\[
U(x):= \beta \log (1+x^2) , \quad x\in \R .
\]
Denote $Z_\beta$ the normalization constant,
\[
Z_\beta := \int_\R \frac{dx}{(1 + x^2)^\beta } = \frac{\Gamma (1/2) \, \Gamma (\beta - 1/2)}{\Gamma (\beta)}.
\]
It is known that these dynamics do not satisfy the Poincar\'e inequality with the choice $\sigma = 1$ since the distance function $d_\sigma$ is not exponentially integrable. To overcome this difficulty we keep the same measure $\mu$ and choose conveniently the diffusion function $\sigma$. By a recent result of Bobkov and Ledoux \cite{bob_ledoux2}, the Poincar\'e inequality is satisfied with $\sigma (x) := \sqrt{1+x^2}$, i.e. for every $f\in \D (\cE _\mu)$,
\[
\frac{\beta-1}{2} \, \Var _\mu (f) \leq \int_\R ( 1+x^2) \, \left( f'(x) \right)^2 \, \mu (dx) .
\]
With this choice of diffusion function, brief computations give the potential
\begin{eqnarray*}
V_\sigma (x) & = & \frac{\L \sigma}{\sigma} - \left( 2\, \sigma \, \sigma ' - \sigma ^2 \, U' \right) ' \\
& = & -\sigma \sigma'' + \sigma \sigma' U' +\sigma^2 U'' \\
& = & \frac{2 \beta -1}{1+x^2}.
\end{eqnarray*}
Hence by Theorem~\ref{theo:veysseire} we obtain for every $\beta >3/2$ the following lower bound on the spectral gap $\lambda_1$:
\[
\lambda _1 \geq \frac{(2\beta -1) \, Z_\beta}{Z_{\beta - 1}} = \frac{(2\beta -1)(\beta - 3/2)}{\beta - 1},
\]
which can be compared to the constant above $(\beta-1)/2$. \vspace{0.1cm}

Moreover the log-Sobolev inequality holds with the different diffusion function $\tilde{\sigma} (x) := 1+x^2$, cf. \cite{bob_ledoux2}, and with constant $\CLS \geq 4(\beta -1)$ at least for $\beta >1$, that is, for every $f\in \D (\cE _\mu)$ such that $\log f \in \D (\cE _\mu)$,
\[
4 (\beta -1) \, \Ent_\mu (f) \leq \int_\R (1+x^2) ^2 \, f'(x) \, (\log f(x))' \, \mu (dx).
\]
The Bakry-\'Emery criterion, i.e. the choice $a=\tilde{\sigma}$ in Theorem~\ref{theo:LS_miclo} above, allows us to recover this result since we have
\[
V_{\tilde{\sigma}} ( x) = 2 \, (\beta -1) \,(1+ x^2 ), \quad x\in \R ,
\]
and taking the infimum yields $\rho_{\tilde{\sigma}} = 2 (\beta -1)$.

\section*{Acknowledgments}
The authors are grateful to D. Bakry for providing them a preliminary version of the forthcoming book \cite{bgl}. They also thank the french ANR projects Stab and GeMeCoD for financial support.

\end{document}